\numberwithin{equation}{section}
\theoremstyle{plain}
\newtheorem*{rep@theorem}{\rep@title}
\newcommand{\newreptheorem}[2]{%
\newenvironment{rep#1}[1]{%
 \def\rep@title{#2 \ref{##1}}%
 \begin{rep@theorem}}%
 {\end{rep@theorem}}}
\newtheorem{theorem}[equation]{Theorem}
\newtheorem{thm}[equation]{Theorem}
\newtheorem{proposition}[equation]{Proposition}
\newtheorem{corollary}[equation]{Corollary}
\newtheorem{conjecture}[equation]{Conjecture}
\newtheorem{openproblem}[equation]{Open problem}
\newtheorem{exercise}[equation]{Exercise}
\theoremstyle{definition}
\newtheorem{definition}[equation]{Definition}
\newtheorem{defn}[equation]{Definition}
\newtheorem*{question*}{Question}
\newtheorem{example}[equation]{Example}
\newtheorem*{references}{References}
\newtheorem*{relatedPDEs}{Related PDEs}
\newcommand{\RR}{\mathbb{R}}
\newcommand{\Grad}{\nabla}
\newcommand{\abs}[1]{\lvert#1\rvert}
\newcommand{\norm}[1]{\lVert#1\rVert}
\newcommand{\M}{{\mathcal M}}
\newcommand{\R}{\mathbb R}
\newcommand{\dist}{\operatorname{dist}}
\newcommand{\al}{\alpha}
\newcommand{\Lap}{\Delta}
\newcommand{\eps}{\varepsilon}
\newcommand{\ra}{\rightarrow}
\providecommand{\abs}[1]{\lvert #1\rvert}
\providecommand{\norm}[1]{\lvert\lvert #1\rvert\rvert}
\def\XXint#1#2#3{{\setbox0=\hbox{$#1{#2#3}{\int}$}
     \vcenter{\hbox{$#2#3$}}\kern-.5\wd0}}
\begin{document}

\title[Mean curvature flow of surfaces]{Lectures on mean curvature flow of surfaces}
\author{Robert Haslhofer}

\thanks{I thank the organizers of the summer schools at UT Austin in 2021 and at CRM Montreal in 2024, and all the participants for their questions and feedback. Some of the results described in these lectures are from joint work with Kyeongsu Choi, Wenkui Du, Or Hershkovits, Bruce Kleiner and Brian White. My research has been supported by an NSERC Discovery Grant and a Sloan Research Fellowship.}

\date{\today}
\maketitle

%\tableofcontents

\begin{abstract}
Mean curvature flow is the most natural evolution equation in extrinsic geometry, and shares many features with Hamilton's Ricci flow from intrinsic geometry. In this lecture series, I will provide an introduction to the mean curvature flow of surfaces, with a focus on the analysis of singularities. We will see that the surfaces evolve uniquely through neck singularities and nonuniquely through conical singularities. Studying these questions, we will also learn many general concepts and methods, such as monotonicity formulas, epsilon-regularity, weak solutions, and blowup analysis that are of great importance in the analysis of a wide range of partial differential equations. These lecture notes are from summer schools at UT Austin and CRM Montreal, and also contain a detailed discussion of open problems and conjectures.
\end{abstract}

\tableofcontents

\section{Overview and basic properties}

In this first lecture, I will give a quick informal introduction to the mean curvature flow of surfaces.\footnote{For concreteness we focus on $2$-dimensional surfaces in $\mathbb{R}^3$, but of course many things could be generalized to higher dimensions and other ambient spaces.}

A smooth family of embedded surfaces $\{M_t\subset \R^{3}\}_{t\in I}$ moves by mean curvature flow if
\begin{equation}\label{eq_mcf}
\partial_t x = \vec{H}(x)
\end{equation}
for $x\in M_t$ and $t\in I$. Here, $I\subset \R$ is an interval, $\partial_t x$ is the normal velocity at $x$, and $\vec{H}(x)$ is the mean curvature vector at $x$.

If we write $\vec{H}=H\vec{\nu}$, where $\vec{\nu}$ is the inwards unit normal, then $H$ is given by the sum of the principal curvatures, $H=\kappa_1+\kappa_2$. Recall that the principal curvatures are the eigenvalues of the second fundamental form. More concretely, given any point $p$ on a surface $M$, if we express the surface locally as a graph of a function $u$ over the tangent space $T_pM$, then $\kappa_1(p)$ and $\kappa_2(p)$ are simply the eigenvalues of $\mathrm{Hess}(u)(p)$.

\begin{example}[Shrinking sphere and cylinder] If $M_t=S^2({r(t)})$ is a round sphere, then equation \eqref{eq_mcf} reduces to an ODE for the radius, namely
$\dot{r}=-2/r$.
The solution with $r(0)=R$ is $r(t)=\sqrt{R^2-4t}$, where $t\in (-\infty,R^2/4)$.
Similarly, we have the round shrinking cylinder $M_t=\R \times S^1({r(t)})$ with $r(t)=\sqrt{R^2-2t}$, where $t\in (-\infty,R^2/2)$.
\end{example}

\begin{exercise}[Graphical evolution]
Show that if $M_t =\mathrm{graph}(u(\cdot,t))$ is the graph of a time-dependent function $u(\cdot,t):\mathbb{R}^2\to\mathbb{R}$, then\footnote{Hint: It is important to remember that $\partial_t x$ denotes the \emph{normal} velocity at $x$.}
\begin{equation}
\partial_t u= \sqrt{1+|D u|^2}\,\mathrm{div}\!\left(\frac{Du}{\sqrt{1+|Du|^2}} \right) \, .
\end{equation}
\end{exercise}

Instead of viewing the mean curvature flow as an evolution equation for the hypersurfaces $M_t$, we can also view it as an evolution equation for a smooth family of embeddings $X: M^2 \times I \rightarrow \RR^{3}$ with $M_t=X(M,t)$.
Setting $x=X(p,t)$, equation \eqref{eq_mcf}  then takes the form
\begin{equation}\label{eq_mcf2}
\partial_t X(p,t) = \Lap_{M_t} X(p,t).
\end{equation}

The fundamental idea of geometric flows is to deform a given geometric object into a nicer one, by evolving it by a heat-type equation.
This indeed works very well, as illustrated by the following theorem.

\begin{theorem}[Huisken's convergence theorem]
Let $M_0\subset \R^{3}$ be a closed embedded surface. If $M_0$ is convex, then the mean curvature flow $\{M_t\}_{t\in[0,T)}$ starting at $M_0$ converges to a round point.
\end{theorem}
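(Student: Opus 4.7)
The plan is to follow Huisken's 1984 argument. Short-time existence of the flow from $M_0$ follows from \eqref{eq_mcf2} being parabolic modulo tangential diffeomorphism. Next I would derive the evolution equations for the induced metric $g_{ij}$, the second fundamental form $h_{ij}$, the mean curvature $H$, and $|A|^2=\kappa_1^2+\kappa_2^2$. Using Simons' identity, the most important ones are
\[
\partial_t H = \Delta H + |A|^2 H, \qquad \partial_t |A|^2 = \Delta |A|^2 - 2|\nabla A|^2 + 2|A|^4.
\]

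\textbf{Preserving convexity and finite time.} Using Hamilton's tensor maximum principle one shows that $h_{ij}\ge 0$ is preserved, and in fact the initial pinching $\kappa_1 \ge \varepsilon H$ (with $\varepsilon>0$ depending only on $M_0$) is preserved. Since $|A|^2 \ge \tfrac12 H^2$ by Cauchy--Schwarz, one gets $\partial_t H \ge \Delta H + \tfrac12 H^3$, so $\min H$ blows up at some finite $T>0$. Comparing with a circumscribed and an inscribed round sphere via the avoidance principle shows the enclosed region collapses to a single point $p_\ast\in\R^3$ as $t\uparrow T$, with diameter comparable to $\sqrt{T-t}$.

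\textbf{Improved pinching (main obstacle).} The analytic heart of the proof is to show that the surface becomes asymptotically umbilic. With $f = |A|^2 - \tfrac12 H^2 = \tfrac12(\kappa_1-\kappa_2)^2$, one derives an evolution inequality for the scale-invariant quantity $f/H^{2-\sigma}$. The bad reaction terms here cannot be killed by the maximum principle alone, so one runs a Stampacchia iteration on $L^p$ norms of $(f/H^{2-\sigma})_+$, using the preserved pinching to control the Codazzi cross terms involving $\nabla A \cdot \nabla H$, and absorbing error terms via a Michael--Simon Sobolev inequality on $M_t$. This produces $\sigma>0$ and $C<\infty$ with
\[
|A|^2 - \tfrac12 H^2 \le C\, H^{2-\sigma}
\]
uniformly along the flow. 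I expect this to be the hardest step.

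\textbf{Blowup and smooth convergence.} Finally, introduce the parabolic rescaling around $(p_\ast,T)$ with rescaled time $\tau = -\tfrac12\log(T-t)$ so that the normalized surfaces $\tilde M_\tau$ have uniformly controlled mean curvature. Higher derivative estimates $|\nabla^k A|\le C_k$ on the rescaled flow (Huisken's Bernstein-type estimates), together with preserved strict convexity and diameter bounds from step two, yield subsequential smooth convergence to a closed convex limit; the improved pinching forces this limit to be totally umbilic, hence a round unit sphere. Exponential decay of the tracefree second fundamental form, derived from the improved pinching, upgrades this to smooth convergence along the full flow, proving that $M_t$ shrinks to a round point as $t\uparrow T$.
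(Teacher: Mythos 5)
The paper states Huisken's convergence theorem as background in the overview section and does not supply a proof, so there is nothing in the source text to compare against line by line. Your outline is, however, a faithful summary of Huisken's original 1984 argument: short-time existence, the evolution equations for $H$ and $|A|^2$ via Simons' identity, preservation of convexity and of the pinching $\kappa_1\geq\varepsilon H$ by the tensor maximum principle, finite-time blowup from $\partial_t H\geq\Delta H+\tfrac12 H^3$, the Stampacchia/Michael--Simon iteration for the improved pinching $|A|^2-\tfrac12 H^2\leq C H^{2-\sigma}$, and then the gradient estimate, rescaling, and convergence to a round sphere. This is the standard route and it is correct.

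One small terminological correction: $f/H^{2-\sigma}$ is not scale-invariant. Since $f=|A|^2-\tfrac12 H^2$ scales like $H^2$, the scale-invariant ratio is $f/H^2$, and the quantity $f/H^{2-\sigma}$ scales like $H^{\sigma}$. That is precisely the point of the estimate: showing $f/H^{2-\sigma}$ stays bounded forces $f/H^2\to 0$ as $H\to\infty$, which is the asymptotic umbilicity you need. Also, the statement that the diameter is ``comparable to $\sqrt{T-t}$'' is an output of the pinching and noncollapsing analysis, not of the bare circumscribed/inscribed sphere comparison (which only gives the upper bound and the collapse to a point); in a fuller writeup you would want to be careful to establish the lower bound only after the pinching is in hand. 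Neither of these affects the overall structure, which is sound.
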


The convex case ($\kappa_1\geq 0$ and $\kappa_2\geq 0$) is of course very special.
In more general situations, we encounter the formation of singularities.

\begin{example}[Neckpinch singularity]
If $M_0$ has the topology of a sphere but the geometry of a dumbbell, then the neck pinches off. As blowup limit we get a selfsimilarly shrinking round cylinder. There is also a degenerate variant of this example, where as blowup limit along suitable tip points one gets the selfsimilarly translating bowl soliton.
\end{example}

In the study of mean curvature flow (and indeed of most nonlinear PDEs) it is of crucial importance to understand singularities:
\begin{itemize}
\item How do singularities look like?
\item Can we continue the flow through singularities?
\item What is the size and the structure of the singular set?
\item Is the evolution through singularities unique or nonunique?
\end{itemize}

The analysis of singularities will be our main focus for the following lectures. We conclude this first lecture, by summarizing a few basic properties of the mean curvature flow.

First, by standard parabolic theory, given any compact initial hypersurface $M_0\subset \R^{3}$ (say smooth and embedded), there exists a unique smooth solution $\{M_t\}_{t\in [0,T)}$ of \eqref{eq_mcf} starting at $M_0$, and defined on a maximal time interval $[0,T)$. The maximal time $T$ is characterized by the property that the curvature blows up, i.e.
$\lim_{t\to T}\max_{M_t}\abs{A}=\infty$.

Second, like for any second order parabolic equation, time scales like distance squared. For example, if $u(x,t)$ solves the heat equation $\partial_t u =\Delta u$, then given any $\lambda>0$ the parabolically rescaled function $u^\lambda(x,t)=u(\lambda x,\lambda^2 t)$ again solves $\partial_t u^\lambda=\Delta u^\lambda$. The following exercise shows that the same rescaling indeed works for mean curvature flow:

\begin{exercise}[Parabolic rescaling]
Let $M_t\subset \R^{3}$ be a mean curvature flow of surfaces, and let $\lambda>0$.
Let  $M_{t'}^\lambda$ be the family of surfaces obtained by the parabolic rescaling $x'=\lambda x$, $t'=\lambda^2 t$, i.e. let $M_{t'}^\lambda=\lambda M_{\lambda^{-2}t'}$. Show that $M_{t'}^\lambda$ indeed solves \eqref{eq_mcf}.
\end{exercise}

In particular, due to the scaling, all estimates naturally take place in parabolic balls $P(x_0,t_0,r)=B(x_0,r)\times (t_0-r^2,t_0]$.

Third, by the maximum principle (aka avoidance principle) mean curvature flows do not bump into each other. More precisely, if $M_t$ and $N_t$ are two  mean curvature flows (say at least one of them compact), then $\dist(M_t,N_t)$ is nondecreasing in time. In particular, if $M_{t_0}$ and $N_{t_0}$ are disjoint, then so are $M_t$ and $N_t$ for all $t\geq t_0$. Similarly, the flow does not bump into itself, i.e. embeddedness is also preserved.

Forth, the evolution equation \eqref{eq_mcf} implies evolution equations for the induced metric $g_{ij}$, the area element $d\mu$, the normal vector $\vec{\nu}$, the mean curvature $H$, and the second fundamental form $A$:

\begin{proposition}[Evolution equations for geometric quantities]\label{prop_evol_eq}
If $M_t\subset \R^{3}$ evolves by mean curvature flow, then 
\begin{equation}\label{eq_evol}
\begin{array}{lll}
\partial_t g_{ij}=-2HA_{ij} & \partial_td\mu=-H^2 d\mu &\partial_t\vec{\nu}=-\nabla H\\
\partial_t H=\Lap H+\abs{A}^2 H & \partial_t A^i_j=\Lap A^i_j+\abs{A}^2 A^i_j.&
\end{array}
\end{equation}
\end{proposition}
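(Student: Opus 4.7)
The plan is to work in local coordinates on $M$, writing the flow as an embedding $X(\cdot,t)$ satisfying $\partial_t X = H\vec\nu$ (we may assume the tangential component vanishes, since a tangential reparametrization does not affect any of the geometric quantities in \eqref{eq_evol}). All five identities then follow from a single source, namely differentiating the \emph{definitions}
\[
g_{ij} = \langle \partial_i X, \partial_j X\rangle, \qquad A_{ij} = \langle \partial_i\partial_j X, \vec\nu\rangle, \qquad |\vec\nu|^2=1,\ \langle\vec\nu,\partial_iX\rangle=0,
\]
in time and simplifying using the Gauss-Weingarten equations $\partial_i\partial_j X = \Gamma_{ij}^k \partial_k X + A_{ij}\vec\nu$ and $\partial_i \vec\nu = -A_i^{\ k}\partial_k X$.

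First I would compute $\partial_t g_{ij}$. Differentiating $g_{ij} = \langle \partial_i X, \partial_j X\rangle$ and substituting $\partial_t X = H\vec\nu$, the derivative of the normal part in a tangent direction produces precisely the Weingarten term, giving $\partial_t g_{ij} = -2HA_{ij}$. Jacobi's formula $\partial_t \log\det g = g^{ij}\partial_t g_{ij} = -2H^2$ immediately yields $\partial_t d\mu = -H^2 d\mu$. For $\vec\nu$, the constraint $|\vec\nu|=1$ forces $\partial_t\vec\nu$ to be tangential, and differentiating $\langle\vec\nu,\partial_i X\rangle = 0$ in $t$ yields $\langle \partial_t\vec\nu,\partial_i X\rangle = -\langle\vec\nu,\partial_i(H\vec\nu)\rangle = -\partial_i H$, so $\partial_t\vec\nu = -\nabla H$.

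The most involved step is the evolution of $A$. Differentiating $A_{ij}=\langle \partial_i\partial_j X,\vec\nu\rangle$ and using the formulas already obtained gives, after isolating normal components and recognising Christoffel corrections,
\[
\partial_t A_{ij} = \nabla_i\nabla_j H - H A_{ik}A^k_{\ j},
\]
where the $-HA_{ik}A^k_{\ j}$ comes from $\langle\partial_i\partial_j\vec\nu,\vec\nu\rangle = -\langle\partial_i\vec\nu,\partial_j\vec\nu\rangle = -A_{ik}A^k_{\ j}$. The key obstacle is now to trade $\nabla_i\nabla_j H$ for $\Delta A_{ij}$, which requires Simons' identity
\[
\Delta A_{ij} = \nabla_i\nabla_j H + HA_{ik}A^k_{\ j} - |A|^2 A_{ij}.
\]
This is the only non-routine computation: one starts from $\nabla_i\nabla_j H = g^{kl}\nabla_i\nabla_j A_{kl}$, uses the Codazzi identity $\nabla_i A_{jk}=\nabla_j A_{ik}$ (automatic in the ambient space $\mathbb{R}^3$ which is flat), commutes covariant derivatives picking up the intrinsic Riemann tensor, and substitutes the Gauss equation $R_{ijkl} = A_{ik}A_{jl}-A_{il}A_{jk}$ to express everything back in terms of $A$.

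Combining Simons' identity with the formula for $\partial_t A_{ij}$ gives $\partial_t A_{ij} = \Delta A_{ij} - 2HA_{ik}A^k_{\ j} + |A|^2 A_{ij}$. Finally, raising an index via $\partial_t g^{ik} = 2HA^{ik}$ contributes a $+2H(A^2)^i_{\ j}$ term that exactly cancels the $-2H(A^2)^i_{\ j}$ appearing above, producing the clean form $\partial_t A^i_{\ j} = \Delta A^i_{\ j} + |A|^2 A^i_{\ j}$. Tracing this identity gives $\partial_t H = \Delta H + |A|^2 H$, completing the proposition.
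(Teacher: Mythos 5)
Your proposal is correct and follows the same standard approach that the paper sketches: differentiate the coordinate definitions of $g_{ij}$, $d\mu$, $\vec\nu$, $A_{ij}$ in time, use $\partial_t X = H\vec\nu$ together with the Gauss--Weingarten relations, and invoke Simons' identity to convert $\nabla_i\nabla_j H$ into $\Delta A_{ij}$. The paper itself only displays the $g_{ij}$ computation (identical to yours) and leaves the rest implicit, so you have correctly supplied the missing details, in particular identifying Simons' identity as the essential nontrivial ingredient for the second fundamental form and noticing the cancellation of the $\pm 2H(A^2)^i_{\ j}$ terms when passing from $A_{ij}$ to $A^i_{\ j}$.
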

For example, the evolution of $g_{ij}=\partial_i X\cdot \partial_j X$ is computed via
\begin{equation}
\partial_t g_{ij}=2\partial_i (H\vec{\nu})\cdot \partial_j X=2H\partial_i \vec{\nu}\cdot \partial_j X=-2HA_{ij}.
\end{equation}

\begin{exercise}[Evolution equations]
Show that if $G=G(t)$ is a smooth family of invertible matrices, then $\tfrac{d}{dt} \ln\det G=\textrm{tr}_G{\tfrac{d}{dt}G}$. Use this to derive the evolution equation for $d\mu=\sqrt{\det g_{ij}}d^2x$. Moreover, derive the remaining evolution equations by differentiating $A_{ij}=\partial_i\partial_j X \cdot \nu$ (Hint: simplify using Simons identity) and taking the trace.
\end{exercise}

In particular, if $M_0$ is compact the total area decreases according to
\begin{equation}\label{eq_areamon}
\frac{d}{dt}\textrm{Area}(M_t)=-\int_{M_t} H^2 d\mu.
\end{equation}

One can think of this as a variational characterization of the mean curvature flow as the gradient flow of the area functional.

Finally, using Proposition \ref{prop_evol_eq} and the maximum principle we obtain:

\begin{corollary}[mean-convexity and convexity]
Let $M_t\subset \R^{3}$ be a mean curvature flow of closed surfaces.
If $H\geq 0$ at $t=0$, then $H\geq 0$ for all $t>0$.
Similarly, convexity is also preserved.
\end{corollary}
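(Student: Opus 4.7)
The plan is to apply parabolic maximum principles to the evolution equations in Proposition \ref{prop_evol_eq}, with the scalar case ($H\geq 0$ preserved) reducing to the standard maximum principle and the tensorial case ($A\geq 0$ preserved) requiring a Hamilton-type tensor version.

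First I would prove preservation of mean-convexity. The equation $\partial_t H=\Lap H+\abs{A}^2 H$ is a linear parabolic equation for $H$ on the closed evolving surface, with nonnegative potential $\abs{A}^2$. Setting $f(t):=\min_{M_t}H(\cdot,t)$, Hamilton's trick gives a.e.\ $f'(t)\geq \Lap H(p_t,t)+\abs{A}^2(p_t,t)f(t)\geq \abs{A}^2(p_t,t)f(t)$, where $p_t$ achieves the spatial minimum (so $\Lap H(p_t,t)\geq 0$). On any compact subinterval $[0,T']\subset[0,T)$ we have $\abs{A}^2\leq C$, hence whenever $f(t)<0$ we obtain $f'(t)\geq Cf(t)$; together with $f(0)\geq 0$, Gronwall's inequality forces $f(t)\geq 0$ for all $t\in[0,T']$, and letting $T'\to T$ finishes this case.

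Second, for convexity, I would apply a tensor maximum principle to $\partial_t A^i_j=\Lap A^i_j+\abs{A}^2 A^i_j$. The key observations are: (i) the reaction term $F(A):=\abs{A}^2 A$ is a nonnegative scalar multiple of $A$, so the pointwise ODE $\dot{A}=F(A)$ just rescales eigenvalues and hence preserves the cone of positive semidefinite matrices; and (ii) this cone is $O(2)$-invariant. Tracking the smallest principal curvature $\kappa_1(x,t)=\min_{\abs{v}=1}A(x,t)(v,v)$, at a first putative time $t_0$ and point $p_0$ where $\kappa_1$ vanishes, a null unit eigenvector $v_0$ exists, and after extending it to a smooth local vector field $V$ the scalar $\psi(x,t):=A(x,t)(V,V)$ satisfies a scalar parabolic inequality of the same form as in step one. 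The same Gronwall argument then rules out $\kappa_1$ becoming negative.

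The main obstacle is rigorously handling $\kappa_1$, which is only Lipschitz and may attain its spatial minimum at an umbilic point where the eigenvector fields are not smooth. The standard remedy, due to Hamilton, is a support-function argument: one chooses $V$ to be the parallel transport of $v_0$ along radial geodesics emanating from $p_0$, so that $\psi\leq \kappa_1$ in a neighborhood with equality at $(p_0,t_0)$, and the spatial derivatives of $V$ vanish at $p_0$. This gives $\partial_t\psi\leq \Lap \psi+\abs{A}^2\psi$ at $(p_0,t_0)$ and thereby the contradiction. An alternative is to perturb $A\mapsto A+\varepsilon g$ to ensure strict positivity initially, derive strict preservation for the perturbation, and send $\varepsilon\to 0$.
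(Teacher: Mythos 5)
Your proposal is correct and matches the route the paper implicitly takes: the paper gives no proof beyond the one-line remark that the corollary follows from Proposition \ref{prop_evol_eq} and the maximum principle, and your argument is precisely the standard instantiation of that remark, using the scalar maximum principle (via Hamilton's trick and Gronwall) on $\partial_t H=\Lap H+\abs{A}^2 H$ for mean-convexity and Hamilton's tensor maximum principle on $\partial_t A^i_j=\Lap A^i_j+\abs{A}^2 A^i_j$ for convexity. One small caution: in the tensor step, the time $t_0$ should be taken as the first time $\kappa_1$ reaches zero \emph{having been strictly positive before} (or one should work with the $\varepsilon$-perturbation $A+\varepsilon e^{Ct}g$ throughout), since a point where $\kappa_1$ merely vanishes is not yet a contradiction; this is exactly where the Gronwall/perturbation part of your argument does the work.
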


We note that convexity, i.e. $\kappa_1\geq 0$ and $\kappa_2\geq 0$, is a much stronger assumption than mean-convexity, i.e. $H=\kappa_1+\kappa_2\geq 0$.\footnote{Compare this with the study of convex functions, i.e. functions satisfying $\mathrm{Hess}\, u \geq 0$, versus subharmonic functions, i.e. functions satisfying $\Lap u\geq 0$.} Mean-convexity is on the one hand is flexible enough to allow for interesting singularities, e.g. the neck-pinch and degenerate neck-pinch, and flexible enough for interesting applications\footnote{The most famous application for inverse mean curvature flow is the proof of the Penrose inequality by Huisken-Ilmanen \cite{HuiskenIlmanen}. For some other applications of mean-convex flows see e.g. \cite{Schulze_isoperimetric,huisken-sinestrari3,BHH,HaslhoferKetover,WangZhou,HaslhoferKetover2,LiokumovichMaximo}.}, but on the other hand, as we will see, rigid enough to obtain a detailed description of singularities.

\bigskip

\begin{relatedPDEs}
The mean curvature flow is closely related to other geometric PDEs, including in particular the harmonic map flow $\partial_t u = \Delta_{M,N} u$, Hamilton's Ricci flow $\partial_t g_{ij} = -2\mathrm{Rc}_{ij}$ and the Yang-Mills flow $\partial_t A = -D_A^\ast F_A$. It is very fruitful to study these PDEs in parallel, as new insights on one of them often leads to progress on the others.
\end{relatedPDEs}

\bigskip

\bigskip

\section{Monotonicity formula and epsilon-regularity}

In this second lecture, we discuss Huisken's monotonicity formula and the epsilon-regularity theorem for the mean curvature flow.

Recall that by equation \eqref{eq_areamon} the total area is monotone under mean curvature flow. However, since $\textrm{Area}(\lambda M)=\lambda^2 \textrm{Area}(M)$, this is not that useful when considering blowup sequences with $\lambda\to \infty$. A great advance was made by Huisken, who discovered a scale invariant monotone quantity.
To describe this, let $\M=\{M_t\subset \R^{3}\}$ be a smooth mean curvature flow of surfaces, say with at most polynomial volume growth,
let $X_0=(x_0,t_0)$ be a point in space-time, and let
\begin{equation}
 \rho_{X_0}(x,t)=\frac{1}{4\pi(t_0-t)} e^{-\frac{\abs{x-x_0}^2}{4(t_0-t)}}\qquad (t<t_0)
\end{equation}
be the $2$-dimensional backwards heat kernel centered at $X_0$.

\begin{theorem}[Huisken's monotonicity formula]\label{thm_huisken_mon}
\begin{equation}\label{eq_huisken_mon}
 \frac{d}{dt}\int_{M_t} \rho_{X_0} d\mu = -\int_{M_t} \left|\vec{H}-\frac{(x-x_0)^\perp}{2(t-t_0)}\right|^2 \rho_{X_0} d\mu\qquad (t<t_0).
\end{equation}
\end{theorem}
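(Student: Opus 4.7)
The plan is to differentiate under the integral sign, recognize the completed square $\left|\vec H - \frac{(x-x_0)^\perp}{2(t-t_0)}\right|^2$ inside the integrand, and show that the remaining terms cancel after an application of the first variation formula. Since points of $M_t$ move with normal velocity $\vec H$ and $\partial_t d\mu = -H^2 d\mu$ by Proposition \ref{prop_evol_eq}, the chain rule gives
\begin{equation*}
\frac{d}{dt}\int_{M_t}\rho_{X_0}\, d\mu = \int_{M_t}\bigl(\partial_t \rho_{X_0} + \vec H\cdot \nabla_{\R^3} \rho_{X_0} - H^2\rho_{X_0}\bigr)\, d\mu.
\end{equation*}

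Next, writing $\tau = t_0 - t > 0$ and $r = x-x_0$, a direct computation from the explicit formula for $\rho_{X_0}$ yields
\begin{equation*}
\partial_t \rho_{X_0} = \Bigl(\tfrac{1}{\tau} - \tfrac{|r|^2}{4\tau^2}\Bigr)\rho_{X_0}, \qquad \nabla_{\R^3}\rho_{X_0} = -\tfrac{r}{2\tau}\rho_{X_0}.
\end{equation*}
Since $\vec H$ is normal we have $\vec H\cdot r = \vec H\cdot r^\perp$, and substituting into the previous display the pointwise integrand rearranges as
\begin{equation*}
-\Bigl|\vec H - \tfrac{r^\perp}{2(t-t_0)}\Bigr|^2\rho_{X_0} \; + \; \rho_{X_0}\Bigl[\tfrac{1}{\tau} + \tfrac{\vec H\cdot r}{2\tau} - \tfrac{|r^T|^2}{4\tau^2}\Bigr],
\end{equation*}
where $r^T$ denotes the tangential component along $M_t$ (so $|r|^2 = |r^T|^2 + |r^\perp|^2$). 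The claim is thus equivalent to showing that the bracketed remainder integrates to zero.

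To establish this, I would apply the first variation formula $\int_{M_t}\Div_{M_t} V\, d\mu = -\int_{M_t}\vec H\cdot V\, d\mu$ to the vector field $V = r\,\rho_{X_0}$. Using $\Div_{M_t} r = 2$ (the dimension of $M_t$) together with $\nabla_{M_t}\rho_{X_0} = (\nabla_{\R^3}\rho_{X_0})^T = -\tfrac{r^T}{2\tau}\rho_{X_0}$, one computes
\begin{equation*}
\Div_{M_t} V = 2\rho_{X_0} - \tfrac{|r^T|^2}{2\tau}\rho_{X_0}.
\end{equation*}
The resulting first-variation identity, divided by $2\tau$, is precisely the vanishing of the bracketed remainder, completing the proof.

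The main obstacle is recognizing the decomposition into a completed square plus a pure first-variation term; once the correct test field $V = r\rho_{X_0}$ is spotted, everything reduces to routine algebra. A secondary technical point is justifying the interchange of differentiation and integration and the boundary-free first variation formula: the hypothesis of at most polynomial volume growth, combined with the Gaussian decay of $\rho_{X_0}$ and its first two derivatives, ensures that all relevant integrands are in $L^1(M_t)$ and that contributions from infinity vanish.
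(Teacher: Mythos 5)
Your proof is correct, and the algebra checks out in detail: the material-derivative expansion, the explicit formulas for $\partial_t \rho_{X_0}$ and $D\rho_{X_0}$, the decomposition into a completed square plus remainder, and the cancellation of the remainder via the first-variation formula $\int_{M_t}\Div_{M_t}V\,d\mu = -\int_{M_t}\vec H\cdot V\,d\mu$ with $V = (x-x_0)\rho_{X_0}$ all hold.

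The route is a genuine variant of the paper's, though the two reduce to the same underlying cancellation. The paper first establishes a \emph{pointwise} identity,
\begin{equation*}
\bigl(\tfrac{d}{dt}+\Lap_{M_t}-H^2\bigr)\rho_{X_0} = -\Bigl|\vec H - \tfrac{(x-x_0)^\perp}{2(t-t_0)}\Bigr|^2\rho_{X_0}\,,
\end{equation*}
by writing $\Lap_{M_t}\rho = \textrm{div}_{M_t}D\rho + \vec H\cdot D\rho$, completing the square, and verifying that $\partial_t\rho + \textrm{div}_{M_t}D\rho + |\nabla^\perp\rho|^2/\rho = 0$; it then integrates, with $\int_{M_t}\Lap_{M_t}\rho\,d\mu=0$. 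You instead differentiate under the integral sign first, complete the square in the integrand, and observe that the leftover bracket is exactly $\tfrac{1}{2\tau}$ times the first-variation identity for $V=(x-x_0)\rho_{X_0}$. In fact, integrating the paper's pointwise cancellation against $d\mu$ and applying the first-variation formula to convert $\int\textrm{div}_{M_t}D\rho\,d\mu$ into $-\int\vec H\cdot D\rho\,d\mu$ reproduces precisely your bracketed identity, so the two computations are dual to each other. What the paper's pointwise formulation buys is that it immediately generalizes: replacing the divergence theorem by a one-sided inequality for a cutoff $\chi^\rho_{X_0}$ with $(\tfrac{d}{dt}-\Lap_{M_t})\chi^\rho_{X_0}\le 0$ yields the local monotonicity inequality \eqref{app_loc_mon}, and plugging a test function into the Brakke inequality yields \eqref{app_loc_mon_brakke}. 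Your approach gives the global equality cleanly but does not expose the pointwise structure that these later refinements rely on. Your remarks about polynomial volume growth justifying both the differentiation under the integral and the boundary-free first variation are the right technical caveats.
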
 

Huisken's monotonicity formula \eqref{eq_huisken_mon} can be thought of as weighted version of \eqref{eq_areamon}. A key property is its invariance under  rescaling.

\begin{exercise}[Scaling invariance]
Let $x'=\lambda(x-x_0)$, $t'=\lambda^2(t-t_0)$, and consider the rescaled flow $M^\lambda_{t'}=\lambda(M_{t_0+\lambda^{-2}t'}-x_0)$. Prove that
\begin{equation}
 \int_{M_t} \rho_{X_0}(x,t) \, d\mu_t(x) = \int_{M^\lambda_{t'}} \rho_{0}(x',t')\,  d\mu_{t'}(x')\qquad (t'<0).
\end{equation}
\end{exercise}

Another key property is that the equality case of \eqref{eq_huisken_mon} exactly characterizes the selfsimilarly shrinking solutions:

\begin{exercise}[Shrinkers]
Let $\{M_t\subset \R^{3}\}_{t\in (-\infty,0)}$ be an ancient solution of the mean curvature flow. Prove that
$\vec{H}-\frac{x^\perp}{2t}=0$ for all $t<0$ if and only if $M_t=\sqrt{-t}M_{-1}$ for all $t<0$.
\end{exercise}

\begin{proof}[Proof of Theorem \ref{thm_huisken_mon}]
We may assume without loss of generality that $X_0=(0,0)$. The proof of Huisken's monotonicity formula essentially amounts to deriving belows pointwise identity \eqref{eq_pointwise} for $\rho=\rho_0$.

Since the tangential gradient of $\rho$ is given by $\nabla^{M_t}\rho = D\rho-(D\rho\cdot\vec{\nu}) \vec{\nu}$,
the intrinsic Laplacian of $\rho$ can be expressed as
\begin{equation}
\Lap_{M_t}\rho = \textrm{div}_{M_t}\nabla^{M_t}\rho
=\textrm{div}_{M_t}D\rho+\vec{H}\cdot D\rho.
\end{equation}
Observing also that $\tfrac{d}{dt}\rho=\partial_t \rho+\vec{H}\cdot D\rho$, we compute
\begin{align}
(\tfrac{d}{dt}+\Lap_{M_t})\rho&=\partial_t \rho+\textrm{div}_{M_t}D\rho+2\vec{H}\cdot D\rho\nonumber\\
&=\partial_t \rho+\textrm{div}_{M_t}D\rho+\frac{\abs{\nabla^\perp \rho}^2}{\rho}-\Big|\vec{H}-\frac{\nabla^\perp \rho}{\rho}\Big|^2 \rho+H^2\rho.
\end{align}
We can now easily check that $\partial_t \rho+\textrm{div}_{M_t}D\rho+\frac{\abs{\nabla^\perp \rho}^2}{\rho}=0$. Thus 
\begin{equation}\label{eq_pointwise}
(\tfrac{d}{dt}+\Lap_{M_t}-H^2)\rho=-\Big|\vec{H}-\frac{x^\perp}{2t}\Big|^2 \rho.
\end{equation}
Using also the evolution equation $\tfrac{d}{dt}d\mu =-H^2d\mu $, we conclude that
\begin{equation}
 \frac{d}{dt}\int_{M_t} \rho\, d\mu = -\int_{M_t} \left|\vec{H}-\frac{x^\perp}{2t}\right|^2 \rho\,  d\mu  \qquad (t<0).
\end{equation}
This proves the theorem.
\end{proof}

More generally, if $M_t$ is only defined locally, say in $B(x_0,\sqrt{8}\rho)\times (t_0-\rho^2,t_0)$, then we can localize using the cutoff function
\begin{equation}
\chi^\rho_{X_0}(x,t)=\left(1-\frac{\abs{x-x_0}^2+4(t-t_0)}{\rho^2}\right)_+^3\, .
\end{equation}
Observing that $(\tfrac{d}{dt}-\Lap_{M_t})\chi^\rho_{X_0}\leq 0$ we still get the monotonicity inequality
\begin{equation}\label{app_loc_mon}
 \frac{d}{dt}\int_{M_t} \rho_{X_0}\chi^\rho_{X_0} d\mu \leq -\int_{M_t} \left|\vec{H}-\frac{(x-x_0)^\perp}{2(t-t_0)}\right|^2 \rho_{X_0}\chi^\rho_{X_0} d\mu.
\end{equation}
The monotone quantity appearing on the left hand side,
\begin{equation}
\Theta^\rho(\M,X_0,r):=\int_{M_{t_0-r^2}} \rho_{X_0}\chi_{X_0}^\rho d\mu,
\end{equation}
is called the Gaussian density ratio. Note that $\Theta^\infty(\M,X_0,r)\equiv 1$ for all $r>0$ if and only if $\M$ is a multiplicity one plane containing $X_0$.

We will now discuss the epsilon-regularity theorem for the mean curvature flow, which gives definite curvature bounds in a neighborhood of definite size, provided the Gaussian density ratio is close to one.

\begin{theorem}[epsilon-regularity]\label{app_thm_easy_brakke}
There exist universal constants $\eps>0$ and $C<\infty$ with the following significance.
If $\M$ is a smooth mean curvature flow in a parabolic ball $P(X_0,8\rho)$ with
\begin{equation}
 \sup_{X\in P(X_0,r)}\Theta^{\rho}(\M,X,r)<1+\eps
\end{equation}
for some $r\in(0,\rho)$, then
\begin{equation}
 \sup_{P(X_0,r/2)}\abs{A}\leq {C}r^{-1}.
\end{equation}
\end{theorem}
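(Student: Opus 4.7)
The plan is a blowup-and-contradiction argument that combines a Hamilton-style point-picking, smooth parabolic compactness, and the rigidity of self-shrinkers of Gaussian density one. After parabolic rescaling I may assume $X_0 = 0$ and $r = 1$, so the setup is a smooth MCF in $P(0, 8\rho)$ for some $\rho > 1$. Suppose the theorem fails: for each $k$ there is a smooth flow $\mathcal{M}^k$ in $P(0, 8\rho_k)$ satisfying $\sup_{X \in P(0,1)} \Theta^{\rho_k}(\mathcal{M}^k, X, 1) < 1 + 1/k$ yet $\sup_{P(0,1/2)} |A| > k$.

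An iterative Hamilton-style point-picking---repeatedly doubling the supremum of $|A|$ while the accumulated parabolic displacement remains in $P(0, 1)$ by a geometric-series argument---produces $Y_k \in P(0, 1)$ with $Q_k := |A|(Y_k) \to \infty$ and auxiliary scales $L_k \to \infty$ (with $L_k / Q_k \to 0$) such that $|A| \leq 2 Q_k$ on $P(Y_k, L_k/Q_k)$. Parabolically rescaling $\mathcal{M}^k$ around $Y_k$ by the factor $Q_k$ produces smooth flows $\tilde{\mathcal{M}}^k$ with $|A|(0) = 1$ and $|A| \leq 2$ on $P(0, L_k)$. Local Bernstein-type interior estimates for MCF, applied to the Simons equation $\partial_t A = \Delta A + |A|^2 A$, upgrade this to uniform $C^\infty$ bounds on compact subsets, so after passing to a subsequence the $\tilde{\mathcal{M}}^k$ converge smoothly to a smooth ancient MCF $\tilde{\mathcal{M}}^\infty$ in $\mathbb{R}^3$ with $|A|(0) = 1$.

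The heart of the proof is to collapse the Gaussian density to $1$ in the limit. By scale invariance of $\Theta^\rho$, any fixed spacetime basepoint $\tilde{X}$ and fixed scale $\tilde{r} > 0$ correspond, for large $k$, to a basepoint $Y_k + \tilde{X}/Q_k \in P(0, 1)$ of $\mathcal{M}^k$ at scale $\tilde{r}/Q_k \leq 1$; invoking also the monotonicity of $\Theta^\rho$ in $r$, the hypothesis gives $\Theta^{Q_k\rho_k}(\tilde{\mathcal{M}}^k, \tilde{X}, \tilde{r}) < 1 + 1/k$. Since $Q_k\rho_k \to \infty$, the cutoff tends to $1$ on compact sets, and upper semicontinuity under smooth convergence yields $\Theta(\tilde{\mathcal{M}}^\infty, \tilde{X}, \tilde{r}) \leq 1$. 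Conversely, at any on-flow basepoint of the smooth limit the tangent flow as $\tilde{r} \to 0$ is a multiplicity-one plane, so $\Theta \to 1$; Huisken's monotonicity then forces $\Theta \geq 1$ at every scale. Hence $\Theta \equiv 1$ at every on-flow basepoint of $\tilde{\mathcal{M}}^\infty$, including the origin (since $|A|(0) = 1$ places it on the flow). The equality case of Huisken's formula (the shrinker exercise) identifies $\tilde{\mathcal{M}}^\infty$ as a self-shrinker of Gaussian density $1$, and the classical rigidity---that shrinkers of Gaussian density $1$ are multiplicity-one planes---contradicts $|A|(0) = 1$. The hardest step is this density-collapse plus rigidity; the rigidity can also be obtained in a self-contained way by applying the equality case at two distinct on-flow basepoints in the same time slice, forcing chord vectors of that slice to be tangent everywhere on all earlier time slices and hence forcing planarity.
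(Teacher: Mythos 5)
Your proposal is correct and follows essentially the same strategy as the paper's proof: a contradiction argument combining point selection, parabolic rescaling by $Q_k=\abs{A}(Y_k)$, smooth subsequential convergence to a nonflat global limit, and then using the scaling invariance and equality/rigidity case of the (localized) monotonicity formula to conclude the limit is a flat plane. The only difference is one of exposition: the paper compresses the density-collapse step into one line ($\Theta^{\hat\rho_j}(\hat\M^j,0,Q_j)<1+j^{-1}$ with $\hat\rho_j\to\infty$ forces the limit to be a plane), whereas you spell out the scale invariance, monotonicity in $r$, upper semicontinuity, and the lower bound $\Theta\geq 1$ at on-flow points explicitly.
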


Note also that if $\Theta<1+\frac{\eps}{2}$ holds at some point and some scale, then $\Theta<1+\eps$ holds at all nearby points and somewhat smaller scales.

\begin{proof}
 Suppose the assertion fails. Then there exist a sequence of smooth flows $\M^j$ in $P(0,8 \rho_j)$, for some $\rho_j> 1$, with
\begin{equation}
 \sup_{X\in P(0,1)}\Theta^{\rho_j}(\M^j,X,1)<1+j^{-1},
\end{equation}
but such that there are points $X_j\in P(0,1/2)$ with $\abs{A}(X_j)> j$.

Using the so-called point selection technique, we can find space-time points $Y_j\in P(0,3/4)$ with $Q_j=\abs{A}(Y_j)> j$ such that
\begin{equation}\label{app_brakke_point_sel}
 \sup_{P(Y_j,j/10Q_j)}\abs{A}\leq 2 Q_j.
\end{equation}
Let us explain how the point selection works: Fix $j$. If $Y^0_j=X_j$ already satisfies (\ref{app_brakke_point_sel}) with $Q^0_j=\abs{A}(Y^0_j)$, we are done. Otherwise, there is a point $Y^1_j\in P(Y_j^0,j/10Q^0_j)$ with $Q^1_j=\abs{A}(Y^1_j)>2Q^0_j$.
If $Y^1_j$ satisfies (\ref{app_brakke_point_sel}), we are done. Otherwise, there is a point $Y^2_j\in P(Y_j^1,j/10Q^1_j)$ with $Q^2_j=\abs{A}(Y^2_j)>2Q^1_j$, etc.
Note that $\frac{1}{2}+\frac{j}{10Q_j^0}(1+\frac{1}{2}+\frac{1}{4}+\ldots)<\frac{3}{4}$. By smoothness, the iteration terminates after a finite number of steps, and the last point of the iteration lies in $P(0,3/4)$ and satisfies (\ref{app_brakke_point_sel}).

Continuing the proof of the theorem, let $\hat\M^j$ be the flows obtained by shifting $Y_j$ to the origin and parabolically rescaling by $Q_j=\abs{A}(Y_j)\to\infty$.
Since the rescaled flow satisfies $\abs{A}(0)=1$ and $\sup_{P(0,j/10)}\abs{A}\leq 2$, we can pass smoothly to a nonflat global limit. On the other hand, by the rigidity case of (\ref{app_loc_mon}), and since
\begin{equation}
 \Theta^{\hat\rho_j}(\hat\M^j,0,Q_j)<1+j^{-1},
\end{equation}
where $\hat\rho_j=Q_j\rho_j\to\infty$, the limit is a flat plane; a contradiction.
\end{proof}

\bigskip

\begin{relatedPDEs}
Monotonicity formulas and epsilon-regularity theorems are a key tool in the study of many PDEs. Historically, this goes back at least to the classical monotonicity formula for harmonic functions. Let me mention a few further instances. The monotonicity formula and epsilon-regularity theorem for the harmonic map flow are due to Struwe \cite{Struwe_mon}.
The elliptic cousin of Huisken's monotonicity formula is the monotonicity formula for minimal surfaces, which states that if $M$ satisfies $H=0$ then the function $r\mapsto \frac{\textrm{Area}(M\cap B_r(p_0))}{\pi r^2}$ is monotone. The epsilon-regularity theorem for minimal surfaces was proved by Allard \cite{Allard}. Similarly, for manifolds with nonnegative Ricci-curvature the volume ratios are monotone (but in the opposite direction) by a result of Bishop-Gromov \cite{Gromov_mon}. A monotonicity formula for the Ricci flow was discovered by Perelman \cite{Perelman}. The epsilon-regularity theorem for Einstein metrics was proved by Anderson \cite{Anderson} and the one for the Ricci flow by Hein-Naber \cite{HeinNaber}.

\end{relatedPDEs}

\bigskip

\bigskip

\section{Noncollapsing, curvature and convexity estimate}

In this third lecture, we discuss the noncollapsing result of Andrews, as well as the local curvature estimate and the convexity estimate.

The following quantitative notion of embeddedness plays a key role in the theory of mean-convex mean curvature flow.

\begin{defn}[noncollapsing]\label{def_andrews_static}
A closed embedded mean-convex surface $M\subset \RR^{3}$ is called \emph{$\alpha$-noncollapsed},
if each point $p\in M$ admits interior and exterior balls tangent at $p$ of radius $\alpha/H(p)$.
\end{defn}

By compactness, every closed embedded mean-convex initial surface is $\alpha$-noncollapsed for some $\alpha>0$. This is preserved under the flow:

\begin{thm}[Andrews' noncollapsing theorem]\label{thm_andrews}
 If the initial surface $M_0\subset \mathbb{R}^3$ is $\alpha$-noncollapsed, then so is $M_t$ for all $t\in [0,T)$.
\end{thm}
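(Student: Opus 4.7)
The plan is to apply the parabolic maximum principle to a two-point function that encodes the noncollapsedness condition. Let $\vec{\nu}$ denote the inward unit normal. After squaring the inequality $\abs{q - (p+(\alpha/H(p))\vec{\nu}(p))}^2 \geq (\alpha/H(p))^2$, the existence of an interior tangent ball of radius $\alpha/H(p)$ at $p\in M_t$ becomes equivalent to
$$ Z(p,q,t) \;:=\; H(p,t)\,\abs{p-q}^2 \,-\, 2\alpha\,\langle q-p,\vec{\nu}(p,t)\rangle \;\geq\; 0 \quad \text{for every } q\in M_t. $$
The exterior-ball condition is encoded by the sign-flipped version $\widetilde{Z} := H\abs{p-q}^2 + 2\alpha\langle q-p, \vec{\nu}(p)\rangle$, for which the argument is symmetric, so I focus on $Z$.

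Suppose $Z\geq 0$ fails and let $t_0\in(0,T)$ be the first time it does. By compactness of $M_{t_0}$, there is a minimizing pair $(p_0,q_0)\in M_{t_0}\times M_{t_0}$ with $Z(p_0,q_0,t_0)=0$. The diagonal case $p_0=q_0$ reduces, via Taylor expansion of $Z$ in the chord direction, to the pointwise tensor inequality $H\,\mathrm{Id}\geq \alpha A$ at $p_0$, which is preserved separately by the tensor maximum principle applied to $\partial_t A^i_j = \Delta A^i_j + \abs{A}^2 A^i_j$ from Proposition \ref{prop_evol_eq}. Thus I may assume $p_0\neq q_0$, in which case at the interior minimum the tangential first-order conditions $\nabla_p^{M_{t_0}} Z = 0 = \nabla_q^{M_{t_0}} Z$, the second-order inequality $\Delta_p Z + \Delta_q Z\geq 0$, and the temporal inequality $\partial_t Z \leq 0$ all hold at $(p_0,q_0,t_0)$.

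The heart of the argument is to compute $(\partial_t - \Delta_p - \Delta_q)Z$ at $(p_0,q_0,t_0)$ using the evolution equations of Proposition \ref{prop_evol_eq}, in particular $\partial_t H = \Delta H + \abs{A}^2 H$ and $\partial_t \vec{\nu} = -\nabla H$, together with the motion law $\partial_t p = H(p)\vec{\nu}(p)$, and to show this operator applied to $Z$ is strictly positive. The first-order condition $\nabla_q^{M_{t_0}} Z = 0$ pins down the direction of $\vec{\nu}(q_0)$ relative to the chord $q_0-p_0$; the first-order condition $\nabla_p^{M_{t_0}} Z = 0$ expresses $\alpha \nabla H(p_0)$ in terms of $A(p_0)$ paired with $q_0-p_0$, so the $\partial_t \vec{\nu}$ contribution cancels against tangential derivatives of $\abs{p-q}^2$. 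The surviving term $\abs{A(p_0)}^2 H(p_0)\abs{p_0-q_0}^2$, strictly positive by mean-convexity whenever $A(p_0)\neq 0$, then dominates and yields $(\partial_t - \Delta_p - \Delta_q)Z > 0$, contradicting the three inequalities above. If $A(p_0)=0$ then $H(p_0)=0$, so $Z(p_0,\cdot,t_0)$ reduces to the linear functional $-2\alpha\langle\cdot-p_0,\vec{\nu}(p_0)\rangle$, which changes sign near $p_0$ — inconsistent with being a minimum of value zero.

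The main obstacle will be the organization of this computation: a naive expansion produces many terms, and the nonnegativity only becomes visible after judicious substitution of the first-order conditions in adapted frames that diagonalize the second fundamental form at $p_0$ and at $q_0$. Following the standard two-point function approach, one decomposes $q_0-p_0 = \ell\,\vec{\nu}(p_0) + \eta$ with $\eta\in T_{p_0}M_{t_0}$, uses $Z(p_0,q_0,t_0)=0$ to solve for $\ell$ in terms of $H(p_0)$ and $\abs{p_0-q_0}$, and controls the residual cross terms between $p$- and $q$-derivatives by Cauchy--Schwarz. A secondary technical issue is the degeneracy of $Z$ along $\{p=q\}$, which is handled either by the pointwise tensor argument above or by working with the infimum over $\{\abs{p-q}\geq\delta\}$ and sending $\delta\to 0$.
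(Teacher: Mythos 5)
Your proposal is Andrews' two-point maximum principle, which is the same underlying argument as the paper's: your $Z(p,q)=H(p)\abs{p-q}^2-2\alpha\langle q-p,\vec\nu(p)\rangle$ satisfies $Z\geq 0$ for all $q$ exactly when the paper's $Z^\ast(p)=\sup_{q\neq p}\tfrac{2\langle q-p,\vec\nu(p)\rangle}{\abs{p-q}^2}$ satisfies $Z^\ast\leq H(p)/\alpha$, and both you and the paper defer the ``rather lengthy computation.'' The one place your sketch would not survive being filled in is the display $(\partial_t-\Delta_p-\Delta_q)Z>0$: the trace inequality $\Delta_p Z+\Delta_q Z\geq 0$ throws away the mixed second derivatives, but the computation needs the full Hessian condition on $M\times M$ tested against \emph{matched} frames $(e_i,\tilde e_i)$, where $\tilde e_i\in T_{q_0}M$ is obtained from $e_i\in T_{p_0}M$ by reflection across the chord $q_0-p_0$; the cross terms $\nabla^p_{e_i}\nabla^q_{\tilde e_i}Z$ then come in with a favorable sign that cancels bad contributions, and estimating them by Cauchy--Schwarz, as you propose, discards precisely this cancellation. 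The remaining pieces of your outline are sound: the diagonal degeneracy is indeed governed by the pointwise pinching $H\,\mathrm{Id}\geq\alpha A$, which evolves by $\partial_t T=\Delta T+\abs{A}^2 T$ and so is preserved by Hamilton's tensor maximum principle, while the case $A(p_0)=0$ never arises since the strong maximum principle gives $H>0$ for $t>0$, so there is no need for the (somewhat imprecise) linear-functional argument.
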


\begin{proof}[Proof (sketch)]
For $x\in M$, the interior ball of radius $r(x)=\alpha/H(x)$ has the center point $c(x)=x+r(x)\nu(x)$.
The condition that this is indeed an interior ball is equivalent to the inequality
\begin{equation}\label{inequ_ymincx}
 \norm{y-c(x)}^2 \geq r(x)^2 \qquad \textrm{for all $y\in M$}.
\end{equation}
Observing $\norm{y-c(x)}^2=\norm{y-x}^2-2r(x)\langle y-x,\nu(x)\rangle +r(x)^2$ and inserting $r(x)=\alpha/H(x)$, the inequality (\ref{inequ_ymincx}) can be rewritten as
\begin{equation}
 \frac{2\langle y-x,\nu(x)\rangle}{\norm{y-x}^2} \leq \frac{H(x)}{\alpha} \qquad \textrm{for all $y\in M$}.
\end{equation}
Now, given a mean-convex mean curvature flow $M_t=X(M,t)$ of closed embedded surfaces, we consider the quantity
\begin{equation}\label{def_overlZ}
 Z^\ast(x,t)=\sup_{y\neq x} \frac{2\langle X(y,t)-X(x,t),\nu(x,t)\rangle}{\norm{X(y,t)-X(x,t)}^2}.
\end{equation}
A rather lengthy computation, which we skip, yields that
\begin{equation}\label{evol_ineq}
 \partial_t Z^\ast\leq \Lap Z^\ast+\abs{A}^2Z^\ast
\end{equation}
in the viscosity sense. Together with the evolution equation for the mean curvature, $\partial_t H = \Delta H + |A|^2 H$, this implies
\begin{equation}
\partial_t \frac{Z^\ast}{H}
\leq \Lap \frac{Z^\ast}{H} + 2 \langle \Grad \log H, \Grad \frac{Z^\ast}{H} \rangle\, .
\end{equation}
Hence, by the maximum principle, if the inequality $Z^\ast/H\leq 1/\alpha$ holds for $t=0$, then it also holds for all $t>0$. This proves interior noncollapsing. Finally, a similar argument shows that the inequality
\begin{equation}\label{eq_extnoncoll}
 Z_\ast(x,t)=\inf_{y\neq x} \frac{2\langle X(y,t)-X(x,t),\nu(x,t)\rangle}{\norm{X(y,t)-X(x,t)}^2}\geq -\frac{H(x,t)}{\alpha}
\end{equation}
is also preserved, which yields exterior noncollapsing.
\end{proof}

The following estimate gives curvature control on a parabolic ball of definite size starting from a mean 
curvature bound at a single point.

\begin{theorem}[local curvature estimate]\label{thm-intro_local_curvature_bounds}
For all $\al>0$ there exist $\rho=\rho(\al)>0$ and $C=C(\al)<\infty$ with the following significance.
If $\M$ is an $\al$-noncollapsed flow defined in a parabolic ball $P(p,t,r)$ centered at a point 
$p\in M_t$ with   $H(p,t)\leq r^{-1}$, then  
\begin{equation}\label{eqn-intro_curvature_estimate}
 \sup_{P(p,t,\rho r)} H \leq C r^{-1}\, .
\end{equation}
\end{theorem}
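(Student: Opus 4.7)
The plan is to argue by contradiction using parabolic rescaling, the point-selection technique, and smooth compactness to extract an ancient limit flow, and then derive a contradiction from the strong maximum principle for the mean curvature.

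By parabolic rescaling we may normalize $r=1$ and $(p,t)=(0,0)$. Suppose the estimate fails: for each $j\in\N$ there is an $\alpha$-noncollapsed flow $\M^j$ in $P(0,0,1)$ with $H(0,0)\leq 1$ but a point $X_j\in P(0,0,1/j)$ at which $H(X_j)>j$. As in the proof of Theorem~\ref{app_thm_easy_brakke}, the point-selection technique produces $Y_j=(y_j,\tau_j)\in P(0,0,3/4)$ with $Q_j:=H(Y_j)\geq j$ such that $\sup_{P(Y_j,\,j/(10Q_j))}H\leq 2Q_j$. Form the parabolic rescalings $\hat\M^j:=Q_j(\M^j-Y_j)$. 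By Andrews' theorem (Theorem~\ref{thm_andrews}) each $\hat\M^j$ remains $\alpha$-noncollapsed, and the interior/exterior ball conditions imply the pointwise bound $|A|\leq\sqrt{2}\,H/\alpha$, so $|\hat A|\leq 2\sqrt{2}/\alpha$ on the rescaled parabolic ball $P(0,0,j/10)$. Combined with standard higher-order interior estimates for mean curvature flow, we pass to a smooth subsequential limit $\hat\M^\infty$, a smooth $\alpha$-noncollapsed mean-convex ancient flow on $(-\infty,0]\times\R^3$ with $\hat H(0,0)=1$ and $\hat H\leq 2$ everywhere.

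To derive a contradiction, I would track the ``good point'' $(0,0)$ through the rescaling: its image $\hat X_\ast^j:=Q_j(-y_j,-\tau_j)$ satisfies $\hat H(\hat X_\ast^j)\leq 1/Q_j\to 0$. In the case that $\hat X_\ast^j$ stays bounded along a subsequence, it converges to a point $\hat X_\ast^\infty$ in the limit flow with $\hat H(\hat X_\ast^\infty)=0$. The parabolic strong maximum principle applied to the evolution equation $\partial_tH=\Delta H+|A|^2H$ from Proposition~\ref{prop_evol_eq} then forces $\hat H\equiv 0$ on the time slice containing $\hat X_\ast^\infty$ and on all earlier slices --- using that the original $M^j_t$ are connected and that connectedness passes to the smooth limit --- contradicting $\hat H(0,0)=1$.

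The main obstacle is the opposite scenario $|\hat X_\ast^j|\to\infty$, in which the good point escapes the limit flow. My preferred resolution is a sharpened point selection, anchored at $(0,0)$ and using jump radii of order $1/(10Q^k)$ rather than $j/(10Q^k)$; this forces $|y_j|+|\tau_j|^{1/2}=O(1/Q_j)$ and hence $|\hat X_\ast^j|=O(1)$, reducing matters to the bounded case, at the modest cost that curvature bounds are initially obtained only on a small rescaled ball and must be bootstrapped to the full $P(0,0,\rho)$ by iterating the estimate at nearby basepoints. An alternative, using heavier machinery, invokes the compactness and classification of $\alpha$-noncollapsed ancient mean-convex flows in $\R^3$ together with the quantitative decay rate of $H$ at spatial infinity on any nontrivial such flow, which is incompatible with having $\hat H(\hat X_\ast^j)\leq 1/Q_j$ at a distance $|\hat X_\ast^j|=O(Q_j)$ from the origin.
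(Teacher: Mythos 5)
Your approach---point selection at the high-curvature point, parabolic blowup, and a strong maximum principle argument on the resulting limit flow---is genuinely different from the paper's proof. The paper instead keeps the compactness argument centered at the \emph{low}-curvature point: after normalizing so that $H(0,0)\leq j^{-1}$ with $\M^j$ defined in $P(0,0,j)$, it shows the un-blown-up sequence converges in the pointed Hausdorff sense to a static plane, using the $\alpha$-noncollapsing balls of radius $\geq\alpha j\to\infty$ at $(0,0)$ as two-sided barriers; it then obtains Gaussian density ratios close to one via one-sided minimization (Exercise~\ref{rem_one_sided_minimization}) and applies the epsilon-regularity theorem (Theorem~\ref{app_thm_easy_brakke}) to contradict the high curvature in $P(0,0,1)$. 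No blowup at the bad point is ever performed. You have correctly identified the central obstruction to your alternative route: after rescaling by $Q_j$ at the selected point $Y_j$, the good point $(0,0)$ is carried to $\hat X_\ast^j=(-Q_jy_j,-Q_j^2\tau_j)$, and if this escapes to infinity your maximum-principle contradiction evaporates.

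Neither of your proposed fixes closes this gap. For the sharpened point selection with jump radii $A/Q^k$ ($A$ fixed), the parabolic displacement of $Y_j$ from the origin satisfies $|y_j|\lesssim |X_j|+\sum_k A/Q_j^k\lesssim 1/j+A/H(X_j)\lesssim (1+A)/j$, since $H(X_j)>j$; hence $|\hat X_\ast^j|\sim Q_j|y_j|\lesssim Q_j/j$, which is unbounded whenever the final selected curvature $Q_j$ grows faster than $j$. The claimed bound $|y_j|=O(1/Q_j)$ would require the selection to stay within $O(1)$ curvature-scales of the origin, but the initial bad point $X_j$ may already lie $\sim Q_j/j$ curvature-scales away, and the iteration can only drift further. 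In addition, with fixed $A$ the rescaled curvature control lives only on the fixed ball $P(0,0,A)$, so the limit is not an ancient flow, and the strong maximum principle now has a parabolic boundary to contend with; ``bootstrapping by iterating the estimate at nearby basepoints'' is circular, since the estimate in question is exactly what is being proved. The alternative via compactness and classification of $\alpha$-noncollapsed ancient flows is likewise circular: as the paper notes after the convexity estimate, that classification is built on top of the local curvature estimate. The structural point the paper exploits, and which your proof misses, is that the $\alpha$-noncollapsing condition at the low-curvature point already sandwiches the surface between balls of radius $\gtrsim\alpha j$, directly flattening the local picture as $j\to\infty$.
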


For comparison, recall that if $u$ is a positive solution of an elliptic or parabolic partial differential equation, then by the classical Harnack estimate the values of $u$ at nearby points are comparable.

\begin{proof}
Suppose towards a contradiction that the assertion fails. Then, there is a sequence $\M^j$ of $\al$-noncollapsed flows defined in $P(0,0,j)$ with 
$H(0,0)\leq j^{-1}$, but such that
\begin{equation}\label{eq_curv_contr}
\sup_{P(0,0,1)}H\geq j.
\end{equation}
We can assume that the outward normal of $M^j_0$ at the origin
is $e_{3}$, and that for every $R<\infty$  the flows $\mathcal{M}^j$ foliate $B(0,R)$ for $j\geq j_0(R)$.

We claim that the sequence $\M^j$ converges in the
pointed Hausdorff sense to the static plane $\{x_3=0\}$ in $\R^{3}\times (-\infty,0]$. 
Indeed, 
for any $R<\infty$ and $d>0$ consider the closed ball $B_{R,d}\subset\{x_3\leq d\}$ of radius $R$ that touches the point $d e_3$.
When $R$ is large, 
it will take approximately time $R d$ for $B_{R,d}$ to leave the upper 
halfspace $\{x_{3}>0\}$.  Since $0\in M^j_0$ for all $j$, it follows
that $B_{R,d}$ cannot be contained in the interior of $M^j_t$
for any $t \in [-T,0]$, where $T \simeq R d$.   
Thus, for large $j$ 
we can
find $d_j\leq d$
such that $B_{R,{d_j}}$ has interior contact with $M^j_t$ at 
some point $q_j$, where $\langle q_j,e_{3}\rangle < d$, $\|q_j\|\lesssim \sqrt{Rd}$,
and 
$\liminf_{j\ra\infty} \langle q_j,e_{3}\rangle\geq 0$. Now, since $M^j_t$ satisfies 
the $\al$-noncollapsing condition, 
 there is a closed ball $ B_j$
with radius at least $\al R/2$ making  exterior contact with $M^j_0$
at $q_j$.
By a simple geometric calculation, this 
implies that $M^j_t$ has  height  $\lesssim d/\al$ in the ball
$B(0,R')$, where $R'$ is comparable  to $\sqrt{Rd}$.  As $d$ and $R$ are 
arbitrary, this implies that for any $T>0$,  and any compact subset $Y\subset\{x_{3}>0\}$, 
 for large $j$ the time slice
$M^j_t$ is disjoint from $Y$, for all 
$t \geq -T$.  Finally, observe that for any $T>0$ and any
compact subset $Y\subset \{x_{3}<0\}$, 
the time slice
$M^j_t$ contains $Y$ for all 
$t \in[-T, 0]$, and large $j$,
because $M^j_{-T}$ contains a ball whose forward evolution under
the flow contains $Y$ at any time $t\in [-T, 0]$. This proves the claim.

Finishing the proof of the theorem, together with one-sided minimization (see below),
we infer that
 for every $\eps>0$, every $t\leq 0$ and every ball $B(x,r)$ centered
on the hyperplane $\{x_{3}=0\}$ we have
\begin{equation}\label{eqn_densitybound}
\textrm{Area}(M_t^j \cap B(x,r)) \leq (1+\eps)\pi r^2\,,
\end{equation}
whenever $j$ is large
enough. Finally, applying the epsilon-regularity theorem for the mean curvature flow (Theorem \ref{app_thm_easy_brakke}) this yields
\begin{equation}
\lim\sup_{j\to\infty}\sup_{P(0,0,1)}\abs{A}=0\, .
\end{equation}
This contradicts \eqref{eq_curv_contr}, and thus concludes the proof.
\end{proof}

\begin{exercise}[One-sided minimization]
\label{rem_one_sided_minimization}
Use Stokes' theorem and mean-convexity to prove the density bound \eqref{eqn_densitybound}.
\end{exercise}

The next estimate gives pinching of the curvatures towards positive:

\begin{theorem}[Convexity estimate]\label{thm-intro_convexity_estimate}
For all $\eps>0$ and $\al >0$, there exists a constant $\eta=\eta(\eps,\al)<\infty$ with the following significance.
If $\M$ is an $\al $-noncollapsed flow defined in a parabolic ball $P(p,t,\eta\, r)$ centered at a point 
$p\in M_t$ with $H(p,t)\leq r^{-1}$, 
then
\begin{equation}
\kappa_1(p,t)\geq -\eps r^{-1}.
\end{equation} 
\end{theorem}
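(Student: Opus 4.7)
The plan is a blowup-and-contradiction argument built on top of the local curvature estimate (Theorem \ref{thm-intro_local_curvature_bounds}) and Andrews' noncollapsing (Theorem \ref{thm_andrews}). Suppose the assertion fails. Then there exist $\eps_0>0$, $\alpha_0>0$, and a sequence of $\alpha_0$-noncollapsed flows $\M^j$ defined on the parabolic ball $P(0,0,j)$ with $H(0,0)\leq 1$ but $\kappa_1(0,0)\leq -\eps_0$. The goal is to pass to a smooth ancient limit and then show that no such limit can exist.

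First I would propagate the curvature bound. Iterating the local curvature estimate along chains of overlapping parabolic balls of definite size, the pointwise bound $H(0,0)\le 1$ combined with $\alpha_0$-noncollapsing upgrades to a uniform bound $\sup_{P(0,0,R)}|A|\le C(R,\alpha_0)$ for every fixed $R<\infty$, provided $j$ is large enough depending on $R$. Standard Arzelà--Ascoli compactness then extracts a subsequence converging smoothly on compact subsets to an ancient flow $\M^\infty$ on $\R^{3}\times(-\infty,0]$. Both $\alpha_0$-noncollapsing and mean-convexity $H\ge 0$ pass to the smooth limit, and the strict inequality $\kappa_1^\infty(0,0)\le -\eps_0<0$ is retained. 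This reduces matters to showing that there is no ancient $\alpha_0$-noncollapsed mean curvature flow of surfaces in $\R^3$ with $H\ge 0$ and $\kappa_1<0$ somewhere.

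To rule this out, I would first apply the strong maximum principle to $\partial_t H=\Delta H+|A|^2 H$: either $H>0$ strictly on $\M^\infty$, or $H\equiv 0$. In the latter case $\M^\infty$ is a static minimal surface, which (as a smooth complete $\alpha_0$-noncollapsed limit in $\R^3$) must be a plane, forcing $\kappa_1^\infty\equiv 0$ and contradicting $\kappa_1^\infty(0,0)\le -\eps_0$. So $H>0$ everywhere and $u:=\kappa_1/H$ is a well-defined bounded function, bounded below by $-1/\alpha_0$ thanks to the exterior noncollapsing inequality \eqref{eq_extnoncoll}. Combining $\partial_t H=\Delta H+|A|^2H$ with the viscosity inequality $\partial_t\kappa_1\ge\Delta\kappa_1+|A|^2\kappa_1$ for the smallest principal curvature yields
\begin{equation*}
\partial_t u \;\ge\; \Delta u \;+\; \tfrac{2}{H}\,\nabla H\cdot\nabla u
\end{equation*}
in the viscosity sense. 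If the infimum of $u$ is negative and attained somewhere in space-time (which I would argue using the uniform $|A|$ and $1/H$ bounds on parabolic balls together with a center-of-mass diagonalization), the strong maximum principle forces $u\equiv c$ with $c<0$ on the past of that point.

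The main obstacle, and the delicate part of the argument, is to extract a contradiction from this rigidity. With $\kappa_1/H\equiv c<0$ one has $\kappa_2/H\equiv 1-c$ and $|A|^2=(c^2+(1-c)^2)H^2$, so the limit surface has strictly negative Gauss curvature everywhere. I would conclude by invoking a splitting/classification statement for ancient $\alpha_0$-noncollapsed mean curvature flows of surfaces in $\R^3$: any such flow with constant ratio of principal curvatures must, up to scaling, be a plane, a round sphere, or a round cylinder, each of which has $\kappa_1\ge 0$ and hence cannot have $c<0$. An alternative, more PDE-intrinsic route would be to run Huisken--Sinestrari's tensor maximum principle directly on the pinching quantity $A_{ij}-\eps H g_{ij}$, using noncollapsing to dominate the reaction terms; this avoids classification at the cost of a lengthier computation. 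Either way, all steps other than this final rigidity are essentially routine consequences of the local curvature estimate and the preservation of noncollapsing under smooth limits.
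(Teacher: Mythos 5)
Your blowup strategy shares a surface resemblance with the paper's, but it departs at three places, and two of these are genuine gaps.

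First, you claim that iterating Theorem \ref{thm-intro_local_curvature_bounds} along chains of overlapping parabolic balls yields a uniform bound $\sup_{P(0,0,R)}|A|\le C(R,\alpha_0)$ for every fixed $R$, and hence a smooth \emph{ancient} limit on $\R^3\times(-\infty,0]$. This does not follow. Each application of the local curvature estimate degrades the mean curvature bound by a factor $C$ while advancing by a ball whose radius is only $\rho/C^{k}$ at the $k$-th step; the total reach of the iteration is $\rho\sum_k C^{-k}=\rho\,C/(C-1)$, a definite but \emph{finite} radius. The paper is careful precisely here: it only extracts a local limit on $P(0,0,\rho/2)$, and it explicitly flags that a separate induction-on-scales argument is needed to show that blowup limits are globally defined and convex. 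Your extraction of an ancient limit is therefore unjustified.

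Second, and most importantly, you miss the structural idea that makes the paper's maximum-principle argument work on a merely local limit: choosing $\eps_0$ to be the \emph{infimum} of the $\eps$'s for which the assertion holds, rather than a fixed bad value. This guarantees that the limit satisfies $\kappa_1/H\ge-\eps_0$ \emph{everywhere} in $P(0,0,r)$, while $\kappa_1/H=-\eps_0$ at the origin, so $(0,0)$ is an interior minimum of the ratio with no extra work. Your alternative --- arguing that the infimum of $u=\kappa_1/H$ on a noncompact ancient flow is attained via ``uniform bounds plus center-of-mass diagonalization'' --- is exactly the step that can fail: on a noncompact ancient flow the infimum may only be approached at spatial or temporal infinity, and the exterior-noncollapsing lower bound $u\ge-1/\alpha_0$ does not give attainment. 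The infimum-of-epsilons device is designed to bypass this.

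Third, once you have $u\equiv c<0$ you invoke a classification of ancient noncollapsed flows with constant principal-curvature ratio, or alternatively Huisken--Sinestrari tensor pinching; both are substantially heavier than what is needed. The paper's concluding step is much shorter: the equality case of the strong maximum principle for $\kappa_1/H$ (Hamilton's rigidity) forces a local product splitting, hence $K=\kappa_1\kappa_2\equiv 0$ near the origin --- directly contradicting $K(0,0)=\kappa_1(0,0)\kappa_2(0,0)<0$. No classification of ancient solutions is required, only the local splitting statement.
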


In particular, any ancient $\alpha$-noncollapsed flow $\{M_t\subset\mathbb{R}^3\}_{t\in (-\infty,T)}$, for example any blowup limit of an $\alpha$-noncollapsed flow, is convex.

\begin{proof}
Fixing $\al $,  let $\eps_0\leq 1/\al $ be the infimum of the epsilons for which the assertion holds, 
and suppose towards a contradiction that $\eps_0>0$. Then, there is a sequence $\M^j$ of $\al$-noncollapsed flows defined in 
$P(0,0,j)$, such that $(0,0)\in \M^j$ and $H(0,0)\leq 1$, but ${\kappa_1}(0,0)\to -\eps_0$ as $j\ra \infty$.
By Theorem \ref{thm-intro_local_curvature_bounds} (local curvature estimate), after passing to a subsequence, 
$\M^j$  converges smoothly to a limit $\M^\infty$ in $P(0,0,\rho/2)$. Observe that for $\M^\infty$ we have $\kappa_1(0,0)=-\eps_0$ and thus $H(0,0)=1$.

By continuity $H>1/2$
in  $P(0,0,r)$ for some $r\in (0,\rho/2)$.  Furthermore, we have $\kappa_1/H\geq -\eps_0$
everywhere in $P(0,0,r)$.  This is because   every $(p,t)\in \M^\infty\cap P(0,0,r)$
is a limit of points $(p_j,t_j)\in  \M^j$, and  for every $\eps > \eps_0$, if 
$\eta=\eta(\eps,\al)$, then for 
large $j$ enough $\M^j$ is defined in $P(p_j,t_j,\eta/ H(p_j,t_j))$,
which implies that $\kappa_1(p_j,t_j)\geq -\eps H(p_j,t_j)$. 
Thus, in $P(0,0,r)$  the ratio $\kappa_1/H$ attains a 
negative minimum $-\eps_0$ at $(0,0)$. Since $\kappa_1<0$ and $H>0$ the Gauss curvature $K=\kappa_1\kappa_2$ at the origin is strictly negative.
However, by the equality case of the maximum principle for $\kappa_1/H$, the surface locally splits as a product and thus the Gauss curvature must vanish; a contradiction.
\end{proof}

Pushing the above methods a bit further, via an induction on scale argument it can be shown that all blowup limits of $\alpha$-noncollapsed flows are smooth and convex until they become extinct. Together with the recent classification by Brendle-Choi, it then follows that for the flow of mean-convex embedded surfaces all singularities at the first singular time are modelled either by a round shrinking sphere, a round shrinking cylinder or a self-similarly translating bowl soliton. This makes precise the intuition that, unless the entire surface shrinks to a round point, all singularities look like neck-pinches or degenerate neck-pinches.

\bigskip

\begin{relatedPDEs}
The notion of $\alpha$-noncollapsing for the mean curvature flow is inspired by Perelman's $\kappa$-noncollapsing for the Ricci flow \cite{Perelman}. For example, $\alpha$-noncollapsing rules out blowup limits like $\mathbb{R}\times$Grim-Reaper, and $\kappa$-noncollapsing rules out blowup limits like $\mathbb{R}\times$Cigar. More generally, conditions with touching balls are used frequently to establish estimates for elliptic or parabolic PDEs. Also, as discussed, the curvature estimate is related, at least in spirit, to Harnack inequalities for positive solutions of elliptic or parabolic PDEs. Finally, for 3d Ricci flow there is the Hamilton-Ivey pinching estimate \cite{Hamilton_survey}.
\end{relatedPDEs}

\bigskip

\bigskip

\section{Weak solutions}

In this lecture, we discuss notions of weak (aka generalized) solutions that allow one to continue the evolution through singularities.

Recall that by the avoidance principle smooth mean curvature flows do not bump into each other. Motivated by this, a family of closed sets $\{C_t\}$ is called a \emph{subsolution} if it avoids all smooth solutions, namely
\begin{equation}\label{eq_avoid}
C_{t_0}\cap M_{t_0}=\emptyset \qquad \Rightarrow \qquad C_{t}\cap M_{t}=\emptyset \quad \forall t\in[t_0,t_1]\, ,
\end{equation}
whenever $\{M_t\}_{t\in[t_0,t_1]}$ is a smooth mean curvature flow of closed surfaces.

\begin{definition}[level-set flow] The \emph{level-set flow} $\{F_t(C)\}_{t\geq 0}$ of any closed set $C$ is the maximal subsolution $\{C_t\}_{t\geq 0}$ with $C_0=C$.
\end{definition}

\begin{proposition}[basic properties] The level-set flow is well-defined and unique, and has the following basic properties:
\begin{itemize}
\item semigroup property: $F_0(C)=C$, $F_{t+t'}(C)=F_t(F_{t'}(C))$.
\item commutes with translations:  $F_t(C+x)=F_t(C)+x$.
\item containment: if $C\subseteq C'$, then $F_t(C)\subseteq F_t(C')$.
\end{itemize}
\end{proposition}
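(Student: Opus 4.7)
The plan is to define $F_t(C)$ as the maximum subsolution starting at $C$, obtained as the union of all subsolutions $\{C_s\}_{s\geq 0}$ with $C_0=C$, and then derive each bullet from this characterization. The underlying lemma used throughout is that arbitrary unions of subsolutions are again subsolutions: if $\{C_t^\alpha\}_{\alpha\in I}$ is any family of subsolutions and $\{M_t\}_{t\in[t_0,t_1]}$ is a smooth mean curvature flow of closed surfaces with $(\bigcup_\alpha C_{t_0}^\alpha)\cap M_{t_0}=\emptyset$, then each $C_{t_0}^\alpha\cap M_{t_0}=\emptyset$, hence each $C_t^\alpha\cap M_t=\emptyset$ for $t\in[t_0,t_1]$, and therefore the union also avoids $\{M_t\}$. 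Given this lemma, uniqueness is tautological and existence follows by taking $F_t(C)$ to be this maximal union.

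The main technical obstacle is the closedness issue: an arbitrary union of closed sets need not be closed. I would handle this by dualizing the definition, setting $x\notin F_t(C)$ iff there exists a smooth mean curvature flow $\{M_s\}_{s\in[0,t]}$ of closed surfaces with $C\cap M_0=\emptyset$ and $x\in M_t$. A short openness argument (slightly perturbing the witness flow) shows that $F_t(C)^c$ is open, so $F_t(C)$ is automatically closed; and one checks that the resulting family still coincides with the maximal subsolution. The identity $F_0(C)=C$ is then immediate, since every subsolution satisfies $C_0=C$ by definition.

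For the semigroup property I would prove both inclusions. For $F_{t+t'}(C)\subseteq F_t(F_{t'}(C))$, the time-shifted family $\{F_{s+t'}(C)\}_{s\geq 0}$ starts at $F_{t'}(C)$ and is itself a subsolution, because the mean curvature flow equation is invariant under time translations, so any test $\{M_t\}_{t\in[a,b]}$ for this family can be converted into a test $\{M_{t-t'}\}_{t\in[a+t',b+t']}$ for $\{F_s(C)\}$; maximality then gives $F_{s+t'}(C)\subseteq F_s(F_{t'}(C))$, and we specialize at $s=t$. For the reverse inclusion I concatenate,
\[
\tilde C_s = \begin{cases} F_s(C) & s\in[0,t'],\\ F_{s-t'}(F_{t'}(C)) & s\in[t',t+t'], \end{cases}
\]
where the two pieces match at $s=t'$ since $F_0(F_{t'}(C))=F_{t'}(C)$. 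A case analysis on a test interval $[a,b]$ — either $b\leq t'$, or $a\geq t'$, or $a<t'<b$ in which case one splits at $t'$ and applies each piece in turn — shows that $\{\tilde C_s\}$ is a subsolution with $\tilde C_0=C$. Hence $\tilde C_{t+t'}\subseteq F_{t+t'}(C)$, giving $F_t(F_{t'}(C))\subseteq F_{t+t'}(C)$.

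Finally, translation invariance and containment are short. The mean curvature flow equation is translation invariant, so $\{C_t\}$ is a subsolution iff $\{C_t+x\}$ is, which shows that $\{F_t(C)+x\}$ is a subsolution starting at $C+x$; by maximality $F_t(C)+x\subseteq F_t(C+x)$, and the same argument with $-x$ gives equality. For containment, if $C\subseteq C'$ then by the union lemma $\{F_t(C)\cup F_t(C')\}$ is a subsolution starting at $C\cup C'=C'$, so by maximality it is contained in $\{F_t(C')\}$, yielding $F_t(C)\subseteq F_t(C')$.
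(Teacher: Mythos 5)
The paper's well-definedness argument hinges on one observation that your proposal does not make: by translation-invariance of smooth flows, the avoidance condition \eqref{eq_avoid} is equivalent to the distance-monotonicity condition $d(C_t,M_t)\geq d(C_{t_0},M_{t_0})$. This reformulation is exactly what dissolves the closedness problem, because distance to a set equals distance to its closure, and distance to a union is the infimum of distances; hence the closure of the union of all subsolutions is itself a subsolution, with no further argument needed. You instead propose a dual characterization ($x\notin F_t(C)$ iff there is a smooth closed witness flow avoiding $C$ at time $0$ and passing through $x$ at time $t$), and your openness argument for the complement is fine, but the sentence ``one checks that the resulting family still coincides with the maximal subsolution'' hides a genuine gap: you would have to show the dual-defined family is itself a subsolution (i.e. that having a pointwise witness for every point of $N_{t_0}$ lets you produce a witness for every point of $N_t$, $t>t_0$), and this does not follow by concatenation, since a witness $\{M^y_s\}$ through $y\in N_{t_0}$ need not coincide with $N_{t_0}$ at time $t_0$. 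Without that step you have only the inclusion ``union of subsolutions $\subseteq$ dual set,'' not equality, so the dual set is not known to be a subsolution at all. Filling this in essentially requires either the distance reformulation or an interposition argument, so you have not avoided the paper's key lemma, only postponed it.

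On the other hand, your treatment of the three bulleted properties is more careful than the paper's (which simply asserts they ``immediately follow''): the time-shift argument for one inclusion of the semigroup property, the concatenation with the case split at $s=t'$ for the other, the two-sided translation argument, and the union-then-maximality argument for containment are all correct once well-definedness is in place (noting that finite unions of closed sets are closed, so the closedness issue does not reappear there).
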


\begin{proof} Observe first that by translation-invariance of smooth solutions, a family of closed sets $\{C_t\}$ is a subsolution if and only if
\begin{equation}\label{equiv_char}
d(C_t,M_t)\geq d(C_{t_0},M_{t_0})\quad \forall t\in[t_0,t_1]\, ,
\end{equation}
whenever $\{M_t\}_{t\in[t_0,t_1]}$ is a smooth closed mean curvature flow.
Now, considering the closure of the union of all subsolutions, namely
\begin{equation}
F_{t'}(C)=\overline{\bigcup \{ C_{t'} \, | \ \{C_t\}_{t\geq 0} \textrm{ is a subsolution} \}}\, ,
\end{equation}
we see that the level-set flow exists and is unique. Finally, the basic properties immediately follow from existence and uniqueness.
\end{proof}

Using the characterization \eqref{equiv_char} it is also not hard to see that level-set solutions are consistent with classical solutions, namely if $\{M\}_{t\in [0,T)}$ is a smooth mean curvature flow of closed surfaces, then
\begin{equation}
F_t(M)=M_t\qquad \forall t\in[0,T).
\end{equation}
Furthermore, by interposing a $C^{1}$-surface one can check that level-set flows also avoid each other, namely
\begin{equation}\label{eq_avoid_lev}
C\cap C' =\emptyset \qquad \Rightarrow\qquad F_t(C)\cap F_t(C')=\emptyset \quad \forall t\geq 0\, ,
\end{equation}
provided that at least one of $C,C'$ is compact. While the level-set solution is unique by definition, the evolution can be nonunique:

\begin{example}[fattening]
There exists a closed embedded surface $M\subset\mathbb{R}^3$, which looks like a wheel with many spokes, that encounters a conical singularity after which $F_t(M)$ develops nonempty interior.
\end{example}

For the sake of intuition, it helps to compare this with the nonuniqueness/fattening of the figure X under curve shortening flow. Now, to capture this nonuniqueness phenomenon in more detail, given any closed embedded surface $M\subset\mathbb{R}^3$ we denote by $K$ the compact domain enclosed by $M$, and set $K':=\overline{K^c}$. Note that $\partial K=M=\partial K'$. We then consider the space-time tracks of their level-set flows, namely
\begin{equation}
\mathcal{K}:=\{ (x,t)\in\mathbb{R}^3\times [0,\infty)\, | \, x\in F_t(K)\}\, ,
\end{equation}
and
\begin{equation}
\mathcal{K}':=\{ (x,t)\in\mathbb{R}^3\times [0,\infty)\, | \, x\in F_t(K')\}\, .
\end{equation}

\begin{definition}[outer and inner flow] The \emph{outer flow} is define by
\begin{equation}
M_t:= \{ x \in \mathbb{R}^3 \, | \, (x,t)\in \partial\mathcal{K} \}\, ,
\end{equation}
and the \emph{inner flow} is defined by
\begin{equation}
M_t':= \{ x \in \mathbb{R}^3 \, | \, (x,t)\in \partial\mathcal{K}' \}\, .
\end{equation}
\end{definition}
Here, for technical reasons it is most convenient to work with the boundary of space-time sets, but alternatively one can check that
\begin{equation}
M_{t}=\lim_{t'\nearrow t} \partial F_{t'}(K)\, ,
\end{equation}
and similarly for the inner flow.

\begin{definition}[discrepancy time] The \emph{discrepancy time} is 
\begin{equation}
T_{\textrm{disc}}:= \inf\{\,  t>0\, | \, M_t\neq M_t'\, \} \in (0,\infty]\, .
\end{equation}
\end{definition}
This captures the first time when nonuniqueness happens.\footnote{As shown recently in \cite{BK_mult1} the discrepancy time $T_{\textrm{disc}}$ is in fact equal to the fattening time $T_{\textrm{fat}}:=  \inf\{\,  t>0\, | \, \textrm{Int}(F_t(M)) \neq 0 \} $.}

While level-set solutions are very well suited for discussing the question of uniqueness versus nonuniqueness, we also need another notion of solutions, so called Brakke flows, that is better suited for arguments based on the monotonicity formula and for passing to limits.

Recall that a Radon measure $\mu$ in $\mathbb{R}^{3}$ is integer two-recifiable, if at almost every point it possess a tangent plane of integer multiplicity. Namely, setting $\mu_{x,\lambda}(A)=\lambda^{-2}\mu(\lambda A+x)$, for $\mu$-a.e. $x$ we have
\begin{equation}
\lim_{\lambda\to 0} \mu_{x,\lambda} = \theta \mathcal{H}^2\lfloor P\, ,
\end{equation}
for some positive integer $\theta$ and some plane $P$. We write $P=T_x\mu$. Also recall that the associated integral varifold is defined by
\begin{equation}
V_{\mu}(\psi)= \int \psi(x,T_x\mu) \, d\mu(x)\, .
\end{equation}

\begin{definition}[Brakke flows]
A two-dimensional \emph{integral Brakke flow} in $\mathbb{R}^{3}$ is a family of Radon measures $\mathcal M = \{\mu_t\}_{t\in I}$ in $\mathbb{R}^{3}$ that is integer two-rectifiable for almost every time and satisfies
\begin{equation}\label{Brakke_inequ}
\frac{d}{dt} \int \varphi \, d\mu_t \leq \int \left( -\varphi {\vec{H}}^2 + D\varphi \cdot \vec{H} \right)\, d\mu_t
\end{equation}
for all test functions $\varphi\in C^1_c(\mathbb{R}^{3},\mathbb{R}_+)$. Here, $\tfrac{d}{dt}$ denotes the limsup of difference quotients, and ${\vec{H}}$ denotes the mean curvature vector of the associated varifold $V_{\mu_t}$.\footnote{By convention, the right hand side is interpreted as $-\infty$ whenever it does not make sense literally. Hence, it actually makes sense literally at almost every time.}

\end{definition}
The definition is of course motivated by fact that for smooth solutions \eqref{Brakke_inequ} would hold as equality. In general though, only the inequality is preserved under passing to weak limits. All integral Brakke flows that we encounter in this lecture series are
\begin{itemize}
\item \emph{unit-regular},  i.e. near every space-time point of Gaussian density $1$ the flow is regular in a two-sided parabolic ball, and
\item \emph{cyclic}, i.e. for a.e. $t$ the $\mathbb{Z}_2$ flat chain $[V_{\mu_t}]$ satisfies $\partial [V_{\mu_t}]=0$.
\end{itemize}

Intuitively, the last item simply means that we can color the inside and outside, which in particular rules out blowup limits like Y$\times \mathbb{R}$.

Also, if $M_t$ is any smooth mean curvature flow, then $\mu_t:=\mathcal{H}^2\lfloor M_t$ is of course a unit-regular, cyclic, integral Brakke flow. 

\begin{theorem}[compactness]\label{thm_comp}
Any sequence of integral Brakke flows $\mu^i_t$  with uniform area bounds on compact subsets has a subsequence $\mu^{i'}_t$ that converges to an integral Brakke flow $\mu_t$, namely (i) for every $t$ we have $\mu^{i'}_t\to \mu_t$ as Radon measures, and (ii) for a.e. $t$ after passing to a further subsequence $i''=i''(t)$ we have $V_{\mu^{i''}_t}\to V_{\mu_t}$ as varifolds.
Moreover, if the sequence is unit-regular/cyclic, then so is the limit.
\end{theorem}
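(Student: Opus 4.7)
The plan is to follow Ilmanen's standard approach: combine a Helly-type selection argument in time (driven by the Brakke inequality itself) with Allard's integer varifold compactness in space, and then use lower semicontinuity to pass Brakke's inequality to the limit. The uniform local area bounds provide the compactness ingredient throughout.

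First, for conclusion (i), I would obtain uniform local BV bounds in time. For a nonnegative cutoff of the form $\varphi = \psi^2$ with $\psi \in C^2_c(\mathbb{R}^3)$, the pointwise identity
$$-\psi^2 \vec{H}^2 + 2\psi D\psi \cdot \vec{H} = -|\psi\vec{H} - D\psi|^2 + |D\psi|^2 \leq |D\psi|^2$$
inserted into \eqref{Brakke_inequ} gives $\tfrac{d}{dt}\mu^i_t(\psi^2) \leq \int |D\psi|^2\, d\mu^i_t \leq C(\psi)$, uniformly in $i$ thanks to the local mass bounds. Hence $t \mapsto \mu^i_t(\psi^2) - C(\psi)\,t$ is nonincreasing uniformly in $i$, and a Helly selection plus diagonal extraction over a countable dense set of test functions produces a subsequence with $\mu^{i'}_t \to \mu_t$ as Radon measures at every $t$. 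Next, for (ii) and integer rectifiability, I would use Allard's integer varifold compactness theorem. Integrating Brakke's inequality against a fixed spatial cutoff gives a uniform bound on $\int_{t_1}^{t_2}\!\int \vec{H}^2\, d\mu^i_s\, ds$, so at a.e.\ $t$ a further subsequential extraction $i''=i''(t)$ yields $L^2(\mu^{i''}_t)$-bounded mean curvatures in addition to the mass bound. Allard's theorem then gives varifold convergence to an integer rectifiable limit which, by (i), must coincide with $V_{\mu_t}$.

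The core analytic step is to verify that $\mu_t$ itself satisfies \eqref{Brakke_inequ}. I would integrate the inequality for $\mu^i$ over a time interval $[t_1,t_2]$, pass the left-hand side using the Radon convergence from the first step, and bound the right-hand side via lower semicontinuity: the identity above rewrites the integrand as $-|\psi\vec{H}-D\psi|^2 + |D\psi|^2$, reducing the matter to lower semicontinuity of $\int |\psi\vec{H}-D\psi|^2\,d\mu$ under integer varifold convergence, a standard consequence of weak convergence of first variations combined with a convex quadratic form argument. Differentiating in $t_2$ then recovers \eqref{Brakke_inequ} for $\mu_t$.

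Finally, for unit-regularity, near a space-time point of Gaussian density one, the localized monotonicity formula \eqref{app_loc_mon} combined with upper semicontinuity of density ratios under the convergence forces the approximators $\mu^{i'}$ to have density close to one on a parabolic neighborhood; their unit-regularity then provides uniform curvature bounds there, upgrading the convergence to smooth convergence in a two-sided parabolic ball and transferring regularity to the limit. For cyclicness, the condition $\partial[V_{\mu^{i'}_t}]=0$ passes to the limit because integer varifold convergence without cancellation (which holds here, since the total mass of $V_{\mu^{i'}_t}$ converges to that of $V_{\mu_t}$ by (i)) implies $\mathbb{Z}_2$ flat chain convergence, and $\partial$ is continuous under flat convergence. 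The main obstacle I expect is the lower semicontinuity step in the Brakke inequality: varifold convergence yields only weak convergence of first variations and not of $|\vec{H}|^2$, so one must carefully re-express the curvature integrand through the first variation before invoking convexity, and handle the timeslices of measure zero where only Radon (not varifold) convergence holds.
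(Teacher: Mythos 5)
Your proposal follows the same overall strategy as the paper's proof sketch: a Peter--Paul/completing-the-square bound combined with Helly selection over a countable dense set of cutoffs gives the pointwise-in-time Radon convergence, the space-time $L^2$ bound on $\vec H$ plus Allard's compactness gives varifold convergence at a.e.\ $t$, lower semicontinuity of the quadratic mean-curvature functional passes Brakke's inequality to the limit, and epsilon-regularity and White's flat-chain result handle unit-regularity and cyclicness respectively. The only cosmetic differences are that you rewrite the integrand by completing the square rather than invoking Peter--Paul directly, and you emphasize the lower semicontinuity viewpoint where the paper writes ``Fatou''; your phrasing of the cyclic step in terms of no-cancellation is a slight reformulation of the first-variation bound that White's theorem actually requires, but the conclusion and structure are the same.
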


\begin{proof}[Proof (sketch)] Using \eqref{Brakke_inequ} and the Peter-Paul inequality
\begin{equation}\label{eq_peter_paul}
 D\varphi \cdot \vec{H}\leq \frac{1}{2}\frac{|D\varphi|^2}{\varphi} + \frac{1}{2}\varphi |\vec{H}|^2\ ,
 \end{equation}
 we see that for every $\varphi\in C^2_c(\mathbb{R}^3;\mathbb{R}_+)$ there exists $C(\varphi)<\infty$ such that
 \begin{equation}
 L^i_{\varphi}(t):=\int \varphi d\mu_t^i - C(\varphi) t
 \end{equation}
 is decreasing in $t$. Hence, after passing to a subsequence, that may depend on $\varphi$, the functions $ L^i_{\varphi}(t)$ converge to a monotone function $L(t)$. In particular, $\int \varphi d\mu_t^i$ has a limit for all $t$. Repeating this process for a countable dense subset of $C^2_c(\mathbb{R}^3;\mathbb{R}_+)$ we can arrange that
 \begin{equation}
 \mu^i_t\to \mu_t
 \end{equation}
as Radon measures for all $t$. Now, by the assumed area bounds, and the inequalities \eqref{Brakke_inequ} and \eqref{eq_peter_paul} we have
\begin{equation}\label{mass_bounds}
\sup_{t\in [t_0,t_1]}\mu_t(K) + \int_{t_0}^{t_1} \int_K H^2 \, d\mu_t^i\, dt \leq C(K,t_1,t_2)
\end{equation}
for every compact set $K\subset\mathbb{R}^3$. Hence, by Allard's compactness theorem, for a.e. $t$ we can find a subsequence $i(t)$ such that
\begin{equation}
V_{\mu_t^{i(t)}}\to V_{\mu_t}
\end{equation}
as integral varifolds. Furthermore, by Fatou's lemma we have
\begin{equation}
\int  \varphi |\vec{H}|^2 \, d\mu_t \leq \liminf_{i\to \infty} \int \varphi |\vec{H}_i|^2 \, d\mu_t^i\, .
\end{equation}
Hence, Brakke's inequality \eqref{Brakke_inequ} passes to the limit. Moreover, 
by Theorem \ref{app_thm_easy_brakke} (epsilon-regularity) the convergence is smooth near points with Gaussian density close to $1$, so being unit-regular is preserved. Likewise, using again  \eqref{mass_bounds} it follows from a result of White that 
\begin{equation}
[V_{\mu_t^{i(t)}}]\to [V_{\mu_t}]
\end{equation}
as mod 2 flat chains, so being cyclic is also preserved.
\end{proof}

\begin{exercise}[monotonicity inequality]
Plug in the backwards heat kernel times a cutoff function as test function in the definition of Brakke flows and use the first variation formula for varifolds to check that the computation from the second lecture goes through, yielding 
\begin{equation}\label{app_loc_mon_brakke}
 \frac{d}{dt} \int\rho_{X_0}\chi^\rho_{X_0} d\mu_t \leq -\int \left|\vec{H}-\frac{(x-x_0)^\perp}{2(t-t_0)}\right|^2 \rho_{X_0}\chi^\rho_{X_0} d\mu_t\, .
\end{equation}
\end{exercise}

Finally, the notions of outer/inner flow and Brakke flow are compatible. Specifically, given any closed embedded initial surface $M_0$ using Ilmanen's elliptic regularization one can construct unit-regular, cyclic, integral Brakke flows $\mu_t$ and $\mu_t'$ with initial condition $\mathcal{H}^2\lfloor M_0$ such that the support is given by the closed sets $M_t$ and $M_t'$, respectively.
\bigskip

\begin{relatedPDEs}

Weak solutions for the harmonic map flow that satisfy the monotonicity inequality have been constructed by Chen-Struwe \cite{ChenStruwe}. Their proof shares some similarities with the construction of weak solutions for the Navier-Stokes equation by Leray \cite{Leray}. Constructing weak solutions for the Ricci flow is still a major open problem, for recent progress in this direction see \cite{KleinerLott_singular,HaslhoferNaber_Ricciflow,Sturm_SuperRicci,Bamler}.
\end{relatedPDEs}

\bigskip

\bigskip

\section{Flow through singularities}

In this lecture, we discuss mean curvature flow of surfaces through singularities. There is now a powerful and highly developed theory, thanks to several recent significant developments, including:
\begin{itemize}
\item the uniqueness result for cylindrical tangent flows
\item the classification of genus zero shrinkers
\item the proof of Ilmanen's mean-convex neighborhood conjecture
\item the proof of Huisken's genericity conjecture
\item the proof of Ilmanen's multiplicity-one conjecture
\end{itemize}

We will now discuss these results and their consequences in non-historical order. Throughout, we denote by $\mathcal{M}=\{M_t\}_{t\geq 0}$ the outer (or inner) flow starting at a closed embedded surface $M_0\subset\mathbb{R}^3$.\\

The capture the basic structure of singularities, given any space-time point $X_0=(x_0,t_0)$ one considers a tangent-flow at $X_0$, namely
\begin{equation}
\hat{\mathcal{M}}_{X_0}:=\lim_{i'\to \infty}\mathcal{D}_{\lambda_i}(\mathcal{M}-X_0)\ ,
\end{equation}
i.e. one shifts $X_0$ to the space-time origin, parabolically dilates by $\lambda_i\to \infty$, and passes to a subsequential limit. By Husiken's monotonicity formula and the compactness theorem for integral Brakke flows, tangent-flows always exit and are always self-similarly shrinking.

The tangent flows provided by the compactness theorem come with some integer multiplicity, which could a priori be bigger than one. This was ruled out in a recent breakthrough by Bamler-Kleiner.

\begin{theorem}[multiplicity-one]
If $M_0\subset \mathbb{R}^3$ is a closed embedded surface, then all tangent flows have multiplicity one.
\end{theorem}

The main potential scenario to rule out is two sheets connected by catenoidal necks that result in a multiplicity-two blowup limit. To do so, the authors introduce a novel sheet separation function $\mathfrak{s}$ (loosely speaking, if the sheets are graphs of $u_1$ and $u_2$, then $\mathfrak{s}\sim u_1-u_2$), and prove that away from the neck one has the differential inequality
\begin{equation}
(\partial_t-\Delta)\log \mathfrak{s}\geq 0.
\end{equation}
They then deal with the neck regions via delicate integral estimates.

Together with $\eps$-regularity and dimension reduction this immediately yields a sharp estimate for the size of the singular set $\mathcal{S}\subset \mathbb{R}^3\times \mathbb{R}_+$.

\begin{corollary}[partial regularity]
For the (outer or inner) flow of embedded surfaces the singular set $\mathcal{S}$ has  dimension at most 1.\footnote{Here, dimension refers to the Hausdorff (or Minkowski) dimension with respect to the parabolic metric on space-time, so e.g. $\mathrm{dim}(\mathbb{R}^3\times \mathbb{R}_+)=5$.}
\end{corollary}

It is know that the ends of noncompact shrinkers are either cylindrical or asymptotically conical, and already in the 90s it has been observed that mean curvature flow through conical singularities can be nonunique. However, asymptotically conical shrinkers can be excluded most of the time thanks to the following two fundamental results.

\begin{theorem}[genus zero shrinkers]
The only nontrivial shrinkers of genus zero are the round sphere and the round cylinder.
\end{theorem}

This result, due to Brendle, is proved by an ingenious application of the stability inequality and the maximum principle. 

\begin{theorem}[generic singularities]
For generic $M_0$, the only tangent flows at singular points are the round shrinking spheres and cylinders.
\end{theorem}

The result, due to Chodosh-Choi-Mantoulidis-Schulze, is proved most easily via a density drop argument, which shows that for any unstable singularity one can perturb such that $\Theta$ drops by a definite amount.

Observing also that spherical singularities are isolated, we can thus assume from now on that $\mathcal{M}$ has a neck-singularity at $X_0$, i.e.
\begin{equation}
\hat{\mathcal{M}}_{X_0}=\{S^1(\sqrt{2|t|})\times \mathbb{R} \}_{t<0}\, ,
\end{equation}
This implicitly makes use of the following foundational result.

\begin{theorem}[uniqueness of tangent flows]
Cylindrical tangent flows are unique, i.e. independent of the choice of rescaling factors  $\lambda_i\to \infty$.
\end{theorem}

This result, which in particular gives uniqueness of the axis, has been established via a Lojasiewicz-Simon argument  by Colding-Minicozzi, who overcame major difficulties caused by the noncompactness.

The key for mean curvature flow through neck-singularities, is the following result from my joint work with Choi and Hershkovits:

\begin{theorem}[mean-convex neighborhoods]
If $\mathcal{M}=\{M_t\}_{t\geq 0}$ has a neck-singularity at $(x_0,t_0)$, then there exists $\eps=\eps(x_0,t_0)>0$ such that $M_t\cap B_\eps(x_0)$ is mean-convex for $|t-t_0|<\eps$.
\end{theorem}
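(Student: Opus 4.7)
The plan is to argue by contradiction using a classification of ancient asymptotically cylindrical flows. Suppose the assertion fails at $X_0=(x_0,t_0)$: then one can find a sequence of space-time points $X_i=(p_i,s_i)\to X_0$ with $p_i\in M_{s_i}$ and $H(p_i,s_i)\leq 0$. Since the tangent flow at $X_0$ is the round shrinking cylinder, the Gaussian density at $X_0$ equals the cylindrical entropy $\Lambda_{\mathrm{cyl}}=\sqrt{2\pi/e}$; upper semicontinuity of Gaussian density together with Huisken's monotonicity formula (Theorem \ref{thm_huisken_mon}) implies that any rescaled limit of $\mathcal{M}$ anchored at the points $X_i$ has entropy at most $\Lambda_{\mathrm{cyl}}$.

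Next I would introduce a cylindrical scale $Z_i>0$ at each bad point, defined, after translating $X_i$ to the space-time origin, as the largest $r>0$ for which the $r^{-1}$-parabolic rescaling of $\mathcal{M}$ is $\delta$-close in a suitable $C^k$-sense on $P(0,\delta^{-1})$ to a standard round shrinking cylinder, where $\delta>0$ is a small fixed threshold. Because $X_0$ is cylindrical, $Z_i>0$; because the shrinking cylinder is strictly mean-convex on compact subsets of its smooth part with a uniform positive lower bound on $H$, the assumption $H(p_i,s_i)\leq 0$ forces $Z_i\to 0$. Parabolically rescaling $\mathcal{M}-X_i$ by $Z_i^{-1}$ and applying the compactness theorem (Theorem \ref{thm_comp}) then produces an ancient, unit-regular, cyclic, integral Brakke flow $\mathcal{L}$ on $(-\infty,0]$, of entropy at most $\Lambda_{\mathrm{cyl}}$, whose tangent flow at $-\infty$ is the round cylinder by the very definition of $Z_i$, and which is not itself a cylinder since cylindricality fails at scale one.

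With these properties in hand, the core of the proof is the classification theorem: any such ancient asymptotically cylindrical flow of surfaces must be a round shrinking cylinder, a translating bowl soliton, or an ancient oval. In the noncollapsed category this is due to Brendle-Choi and Angenent-Daskalopoulos-Sesum; to apply it in the present setting one first shows that $\mathcal{L}$ is noncollapsed on its cap region, essentially because any cylindrical singular point of an embedded mean curvature flow carries a one-sided $\alpha$-noncollapsing constant inherited from the separation of the two sides of $\mathcal{M}$ along the neck, and then invokes the convexity estimate (Theorem \ref{thm-intro_convexity_estimate}) to upgrade $\mathcal{L}$ to a smooth convex ancient flow. Each of the three possible models is strictly mean-convex on its smooth part, so $H_{\mathcal{L}}(0)>0$. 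On the other hand the rescaled mean curvature at the origin equals $Z_i\,H(p_i,s_i)\leq 0$, and since $\mathcal{L}$ is smooth near the origin the convergence is smooth there, giving $H_{\mathcal{L}}(0)\leq 0$, the desired contradiction.

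The main obstacle is the classification step, as one cannot assume global mean-convexity of the original flow and therefore cannot directly invoke Brendle-Choi: the argument must independently produce the noncollapsing of $\mathcal{L}$ and then adapt the neck improvement and spectral analysis over the asymptotic cylinder, together with a rigidity analysis on the cap ruling out exotic transitions from a cylindrical end to an unexpected tip geometry. A secondary but nontrivial difficulty is the correct choice and continuity of the cylindrical scale $Z_i$ in the absence of globally monotone geometric quantities on $\mathcal{M}$, which must be set up so as to guarantee that the rescaled limit neither degenerates to a multiplicity-one plane nor reduces to an exactly cylindrical configuration.
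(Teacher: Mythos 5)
Your reduction of the mean-convex neighborhood statement to the classification of ancient asymptotically cylindrical flows matches the paper's strategy: both pick points with $H\le 0$ converging to the singularity, rescale by a ``cylindrical scale'' that is forced to go to zero, use Colding--Minicozzi uniqueness of cylindrical tangent flows to see the limit is asymptotically cylindrical, and then get a contradiction because all three classified models are strictly mean-convex. Up to this point your argument is essentially the paper's ``short argument by contradiction.''

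The genuine gap is in how you propose to obtain the classification of the limit $\mathcal{L}$. You assert that $\mathcal{L}$ is $\alpha$-noncollapsed on its cap region ``essentially because any cylindrical singular point of an embedded mean curvature flow carries a one-sided $\alpha$-noncollapsing constant inherited from the separation of the two sides of $\mathcal{M}$ along the neck,'' and then you plan to feed this into Theorem~\ref{thm-intro_convexity_estimate} and the Brendle--Choi / Angenent--Daskalopoulos--Sesum classification for noncollapsed ancient flows. This is precisely the step that does not work, and indeed is precisely the difficulty the theorem was designed to overcome: a cylindrical tangent flow only controls the neck, not the cap, and there is no a priori reason why the cap of a degenerate neck-pinch without a mean-convexity hypothesis should satisfy a two-sided noncollapsing bound at scale $1/H$. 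The paper emphasizes that the ``major improvement'' over prior work is that the classification is proved ``assuming neither self-similarity nor convexity''; your proposal would implicitly smuggle convexity-type control back in as a hypothesis.

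The paper's actual route to the classification avoids positing noncollapsing. It runs a fine-neck (spectral) analysis of the renormalized flow over the asymptotic cylinder using the Ornstein--Uhlenbeck operator $\mathcal{L}=\partial_z^2-\tfrac12 z\partial_z+\tfrac12\partial_\theta^2+1$, and applies the Merle--Zaag ODE lemma to split into a dominant-unstable-mode case and a dominant-neutral-mode case. In the unstable case a nonzero slope coefficient $a\neq 0$ is extracted, the Brendle--Choi neck-improvement and a variant of the moving plane method give rotational symmetry and hence the bowl, with no noncollapsing hypothesis. In the neutral case the $z^2-2$ coefficient produces inward quadratic bending, forcing compactness; blowing up at the tips reduces to the bowl case, and only then are mean-convexity and noncollapsing \emph{derived} via the maximum principle, at which point Angenent--Daskalopoulos--Sesum applies to give the oval. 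So noncollapsing appears as a late-stage consequence in one branch of the argument, not as an input. You would need to replace your noncollapsing claim with this direct spectral/moving-plane analysis (or an equivalent) for the proof to go through.

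A smaller point: after introducing the cylindrical scale $Z_i$, passing to a subsequential limit gives you a flow whose tangent flow at $-\infty$ is cylindrical only after invoking the uniqueness of cylindrical tangent flows (Colding--Minicozzi) together with a continuity argument propagating cylindrical closeness across scales between $Z_i$ and $O(1)$; this deserves to be stated explicitly, though the paper does use this ingredient as well.
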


A major difficulty was to rule out the potential scenario of a degenerate neck-pinch with a non-convex cap. Here, to fully capture the singularity, one really wants to understand all limit flows,
\begin{equation}
\mathcal{M}^\infty:=\lim_{\lambda_i\to \infty}\mathcal{D}_{\lambda_i}(\mathcal{M}-X_i)\ ,
\end{equation}
where now $X_i\to X_0$ depends on $i$ (e.g. for the degenerate neck-pinch one chooses $X_i$ along the tip). While tangent-flows are always self-similarly shrinking by Huisken's monotonicity formula, limit flows can be much more general. A priori, choosing $\lambda_i\to \infty$ suitably, we only know that $\mathcal{M}^\infty$ is an ancient asymptotically cylindrical flow, namely an ancient, unit-regular, cyclic, integral Brakke flow $\mathcal{M}=\{ \mu_t\}_{t\in (-\infty,T_E)}$ whose tangent flow at $-\infty$ is a round shrinking cylinder, i.e.
\begin{equation}\label{eq_tang_}
\check{\mathcal{M}}:=\lim_{\lambda_i\to 0} \mathcal{D}_{\lambda_i}\mathcal{M} = \{S^1(\sqrt{2|t|})\times \mathbb{R} \}_{t<0}\, .
\end{equation}
Together with Choi and Hershkovits we classified all such flows:

\begin{theorem}[classification]\label{classification_theorem}
Any ancient asymptotically cylindrical flow in $\mathbb{R}^3$ is either a round shrinking cylinder, or a translating bowl soliton, or an ancient oval.
\end{theorem}

The classification is illustrated in Figure \ref{class_asympt_cyl}. In particular, note that all solutions appearing in the classification result are convex. Hence, the mean-convex neighborhood conjecture follows from the classification theorem via a short argument by contradiction.

\begin{figure}[H]
\includegraphics[width=10cm]{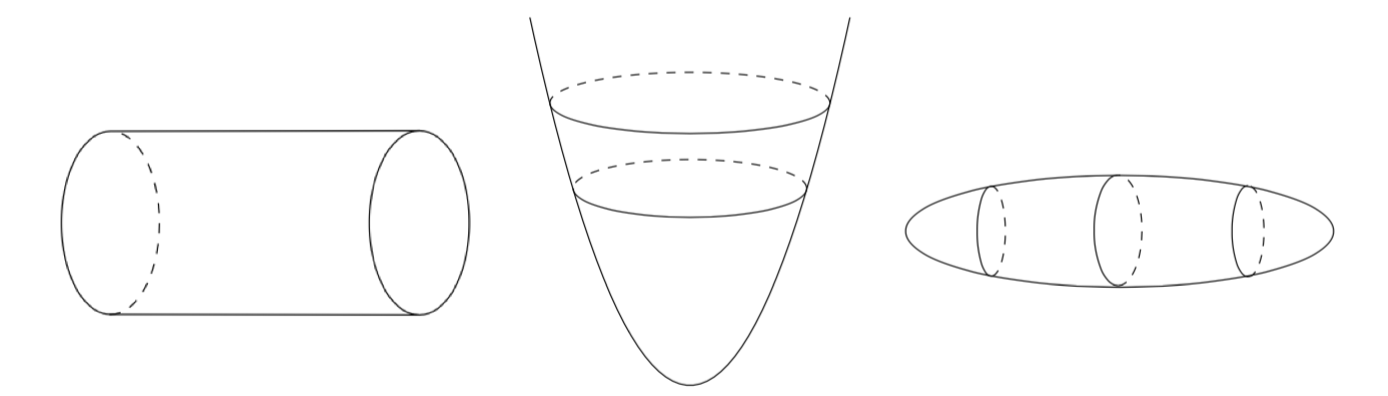}
\vspace{3mm}
\caption{Cylinder, bowl and ancient oval.}\label{class_asympt_cyl}
\end{figure}

As an application of the mean-convex neighborhood theorem and the multiplicity-one theorem one can confirm the uniqueness conjecture for mean curvature flow through neck-singularities:

\begin{corollary}[uniqueness]
Mean curvature flow through neck singularities is unique.
\end{corollary}

Indeed, it has been known since the 90s that mean-convex flows are unique. More recently, Hershkovits-White localized this result and showed that it is enough to assume that all singularities have a mean-convex neighborhood, and we exactly established this assumption.

As another application, taking also into account Brendle's classification of genus zero shrinkers, one can confirm a conjecture of White:

\begin{corollary}[flow of embedded 2-spheres]
Mean curvature flow of embedded 2-spheres is well posed.
\end{corollary}

To conclude this lecture, let us outline the main steps of our proof of the classification theorem (Theorem \ref{classification_theorem}). Given any ancient asymptotically cylindrical flow $\mathcal{M}$ that is not a round shrinking cylinder we have to show that it is either a bowl or an oval.

To get started, we set up a fine-neck analysis as follows. Given any $X_0=(x_0,t_0)$ we consider the renormalized flow
\begin{equation}
\bar{M}_\tau^{X_0}=e^{\tau/2}(M_{t_0-e^{-\tau}}-x_0)\, .
\end{equation}
Then, $\bar{M}_\tau^{X_0}$ converges for $\tau\to -\infty$ to $S^1(\sqrt{2})\times\mathbb{R}$. Hence, writing $\bar{M}_\tau^{X_0}$ locally as a graph of a function $u^{X_0}(z,\vartheta,\tau)$ over the cylinder the evolution is governed by the Ornstein-Uhlenbeck type operator
\begin{equation}
\mathcal{L}=\partial^2_z -\tfrac12 z \partial_z +\tfrac12 \partial_\vartheta^2+1\, .
\end{equation}
This operator has 4 unstable eigenfunctions, namely $1$, $\sin \vartheta$, $\cos\vartheta$, $z$, and 3 neutral eigenfunctions, namely $z\sin\vartheta$, $z\cos\vartheta$, $z^2-2$, and all other eigenfunctions are stable. By the Merle-Zaag ODE lemma for $\tau \to -\infty$ either the unstable or neutral eigenfunctions dominate.

If the unstable-mode is dominant, we prove that there exists a constant $a=a(\mathcal{M})\neq 0$ independent of the center point $X_0$ such that (after suitable recentering to kill the rotations) we have
\begin{equation}
u^{X_0}(z,\vartheta,\tau)=aze^{\tau/2}+o(e^{\tau/2})
\end{equation}
for all $\tau\ll 0$ depending only on the cylindrical scale of $X_0$. Moreover, we show that every point outside a ball of controlled size in fact lies on such a fine-neck. Hence, by the Brendle-Choi neck-improvement theorem the solution becomes very symmetric at infinity. Finally, we can apply a variant of the moving plane method to conclude that the solution is smooth and rotationally symmetric, and hence the bowl.

If the neutral-mode is dominant, then analyzing to ODE for the coefficient of the eigenfunction $z^2-2$ we show that there is an inwards quadratic bending, and consequently that the solution is compact. Blowing up near the tips, by the classification from the unstable-mode case we see bowls. Hence, using the maximum principle we can show that the flow is mean-convex and noncollapsed. Finally, we can apply the result by Angenent-Daskalopoulos-Sesum to conclude that it is an oval.

\bigskip

\begin{relatedPDEs}
Bamler-Kleiner recently proved uniqueness of 3d Ricci flow through singularities \cite{BamlerKleiner_uniqueness}. For 3d Ricci flow thanks to the Hamilton-Ivey pinching estimate one knows a priori that all blowup limits have positive curvature, so this is morally comparable to the flow of mean-convex surfaces. Motivated by our proof of the mean-convex neighborhood conjecture, it seems that there should be a canonical neighborhood theorem for neck-singularities in higher dimensional Ricci flow without assuming positive curvature a priori, c.f. \cite{Haslhofer_4dRicci}.
\end{relatedPDEs}

\bigskip

\bigskip

\section{Open problems}

In this final section, we discuss some of the most important open problems for the mean curvature flow of surfaces.

\begin{conjecture}[No cylinder conjecture]
The only complete embedded shrinker with a cylindrical end is the round cylinder.\footnote{I do not have any convincing heuristics why this should be true, so to err on the side of caution it might be better to call it a question instead of a conjecture.}
\end{conjecture}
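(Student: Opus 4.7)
Let $\Sigma \subset \mathbb{R}^3$ be a complete embedded shrinker (so $\vec{H}=\tfrac{x^{\perp}}{2}$) with a cylindrical end $E$. After a rigid motion $E$ may be written as a graph over $S^1(\sqrt{2}) \times [R,\infty)$,
\begin{equation*}
E = \bigl\{\bigl(\sqrt{2}+u(\theta,z)\bigr)(\cos\theta,\sin\theta,0) + (0,0,z) \,:\, z \geq R \bigr\},
\end{equation*}
with $u$ and all its derivatives tending to zero as $z\to\infty$. The plan is to show that $u \equiv 0$ on some subcylinder $\{z \geq R'\}$ and then invoke real-analytic unique continuation for the elliptic shrinker equation to conclude that the connected embedded surface $\Sigma$ must coincide with the entire round cylinder $S^1(\sqrt{2})\times \mathbb{R}$.

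\textbf{Fine-neck analysis and unique continuation at infinity.} Inserting the graph into $H=\tfrac{1}{2}\langle x,\nu\rangle$ yields a quasilinear equation
\begin{equation*}
\mathcal{L} u = Q[u], \qquad \mathcal{L} = \partial_z^2 - \tfrac{1}{2}z\,\partial_z + \tfrac{1}{2}\partial_\theta^2 + 1,
\end{equation*}
where $Q[u]$ is at least quadratic in $(u,\nabla u, \nabla^2 u)$ and $\mathcal{L}$ is the Ornstein-Uhlenbeck operator from the fine-neck analysis in Section 5. First, I would expand $u$ in Fourier modes in $\theta$ and apply a Merle-Zaag type ODE dichotomy to the resulting $z$-coefficients to rule out any nonzero contribution from the bounded or growing polynomial modes $\{1,\cos\theta,\sin\theta,z,z\cos\theta,z\sin\theta,z^2-2\}$, using that $u\to 0$ as $z\to\infty$. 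Second, for the remaining strictly stable modes---in particular all Fourier modes $|k|\geq 2$, whose linearized eigenvalues are uniformly negative---I would use energy estimates in the Gaussian-weighted space $L^2(e^{-z^2/4})$ to bootstrap $u$ from decay into super-polynomial, and then super-Gaussian, decay. Third, I would close the argument with a Carleman estimate tailored to the drift operator $\mathcal{L}$, schematically of the form
\begin{equation*}
\int e^{\tau\phi(z)}|u|^2\,e^{-z^2/4}\,d\theta\,dz \;\leq\; C\int e^{\tau\phi(z)}|\mathcal{L} u|^2\,e^{-z^2/4}\,d\theta\,dz
\end{equation*}
for a suitable convex weight $\phi$ and $\tau\to\infty$, absorbing the nonlinear error $Q[u]$ using the smallness of $u$, so that $u \equiv 0$ on some $\{z\geq R'\}$. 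Once $\Sigma$ contains a piece of the round cylinder, real-analytic unique continuation for the elliptic shrinker equation propagates coincidence to the entire connected surface, and embeddedness then forces $\Sigma$ to be the full cylinder.

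\textbf{Main obstacle.} The essential difficulty is the Carleman step. The available unique-continuation-at-infinity technology of Escauriaza-Kenig-Ponce-Vega and Escauriaza-Seregin-Sverak is developed for parabolic problems on flat space and does not directly accommodate the Ornstein-Uhlenbeck drift together with the quasilinear error $Q[u]$ on a half-infinite cylinder without genuinely new ideas. Moreover, one cannot a priori rule out a nonzero solution living entirely in the strictly stable high-Fourier subspace $|k|\geq 2$ whose Gaussian tail decays arbitrarily fast; quantitatively controlling the coupling between such modes under the nonlinear operator $Q$ is precisely what current methods cannot do. The author's footnote, flagging this statement as \emph{question rather than conjecture}, reflects exactly this gap: there is no currently known mechanism that bridges super-Gaussian decay to hard vanishing for the shrinker equation in this geometric setting, and it is not even clear on heuristic grounds whether such a mechanism should exist.
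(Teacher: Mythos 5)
This statement is labeled \textbf{Conjecture} in the paper precisely because no proof exists; the author's own footnote warns that there are not even convincing heuristics for its truth. The paper offers no argument for it, only context (Wang's theorem that ends of shrinkers are conical or cylindrical, and the downstream consequences such a result would have for the dichotomy of singularity types). So there is nothing in the paper against which to compare your proposal as a completed proof.

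That said, your write-up is not really a proof attempt but an honest and fairly well-informed reconnaissance of the problem, and your diagnosis of where it breaks is accurate. The framing via the graph equation $\mathcal{L}u = Q[u]$ over the asymptotic cylinder, the Fourier/Merle--Zaag splitting into unstable, neutral and stable modes, and the final analytic-continuation step to upgrade ``a piece of the cylinder'' to ``the whole cylinder'' are all consistent with the machinery developed in Section 5 and with the related rigidity literature (notably L.~Wang's Carleman-based uniqueness for asymptotically \emph{conical} ends, and Colding--Minicozzi's Lojasiewicz approach for cylindrical \emph{tangent flows}). The genuine gap is exactly where you place it: no Carleman or frequency-function estimate is currently known for the Ornstein--Uhlenbeck drift operator $\partial_z^2 - \tfrac12 z\partial_z + \tfrac12\partial_\theta^2 + 1$ on a half-cylinder that would force a rapidly decaying solution supported in the strictly stable Fourier modes $|k|\geq 2$ to vanish identically, and the quasilinear coupling $Q[u]$ between modes compounds the difficulty. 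One further point worth flagging: even granting unique continuation at infinity, you should also rule out the mixed-end scenario (some ends cylindrical, some conical), which is explicitly raised in the paper; your analytic-continuation step does handle this once $u\equiv 0$ near infinity on one cylindrical end, since the open-and-closed argument forces $\Sigma$ to coincide with the cylinder globally, so that part of the outline is fine. In short: your proposal is a reasonable road map that correctly identifies the missing key lemma, but it does not constitute a proof, and the paper does not contain one either.
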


To address this conjecture, one in particular has to rule out the scenario of shrinkers of mixed-type where some ends are cylindrical and some ends are conical. The statement becomes false if the completeness or embeddedness assumption is dropped. Together with prior work of Wang a resolution of the conjecture would imply the nice dichotomy that all singularities are either of conical-type or of neck-type.

\begin{conjecture}[Uniqueness of tangent-flows]
Tangent-flows are independent of the choice of sequence of rescaling factors $\lambda_i\to \infty$.
\end{conjecture}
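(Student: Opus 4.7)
The plan is to adapt the Lojasiewicz-Simon strategy, which has already succeeded in several important special cases. To set up, recall that by Huisken's monotonicity formula every tangent flow at a point $X_0$ is a self-similarly shrinking integral Brakke flow of the form $\{\sqrt{-t}\,\Sigma\}_{t<0}$, where the shrinker $\Sigma$ satisfies $\vec H + \tfrac{x^\perp}{2}=0$. Passing to the renormalized flow
\[
\bar M_\tau^{X_0}=e^{\tau/2}\bigl(M_{t_0-e^{-\tau}}-x_0\bigr),
\]
uniqueness of the tangent flow is equivalent to the statement that $\bar M_\tau^{X_0}$ converges (not merely subconverges) to a single shrinker as $\tau\to\infty$. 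The renormalized flow is precisely the negative $L^2$-gradient flow of Huisken's Gaussian area
\[
F(M)=\tfrac{1}{4\pi}\int_M e^{-|x|^2/4}\,d\mathcal H^2,
\]
whose critical points are exactly the shrinkers, so the conjecture reduces to forward convergence for a geometric gradient flow at an infinite time limit.

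Next, for each shrinker $\Sigma$ arising as a subsequential limit, I would aim to establish a Lojasiewicz-Simon inequality of the form
\[
|F(M)-F(\Sigma)|^{1-\theta}\leq C\,\bigl\|\vec H+\tfrac{x^\perp}{2}\bigr\|_{L^2(e^{-|x|^2/4}d\mu)}
\]
for all surfaces $M$ in a suitable weighted $C^{2,\alpha}$-neighborhood of $\Sigma$, modulo the finite-dimensional symmetry group of rigid motions, with some Lojasiewicz exponent $\theta\in(0,\tfrac12]$. Such an inequality, combined with the gradient flow identity $-\tfrac{d}{d\tau}F(\bar M_\tau)=\|\partial_\tau \bar M_\tau\|^2$, yields $\int^\infty\|\partial_\tau \bar M_\tau\|\,d\tau<\infty$ via the standard chain-rule trick applied to $(F-F(\Sigma))^\theta$, and therefore forces the limit to be unique. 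For a compact smooth shrinker such as the round sphere, Simon's original Lojasiewicz-Simon theorem applies essentially directly, which is Schulze's theorem. For the round cylinder, Colding-Minicozzi's quantitative Lojasiewicz inequality (based on the Ornstein-Uhlenbeck operator $\mathcal L$ introduced in the fine-neck analysis above) handles the case. For asymptotically conical shrinkers, a more recent argument of Chodosh-Schulze uses weighted function spaces tailored to the conical end.

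The hard part is the case of noncompact shrinkers of mixed asymptotic type, whose linearized operator mixes the discrete spectrum of $\mathcal L$ on cylindrical ends with the continuous spectrum on conical ends, forcing one to develop a new weighted analytic framework that interpolates between Colding-Minicozzi and Chodosh-Schulze. Should the \emph{no cylinder conjecture} stated just above resolve affirmatively, this difficulty disappears and the tangent flow uniqueness problem reduces to combining the compact, cylindrical, and conical cases already understood. A second and logically prior obstacle is that a priori the tangent flow is only a self-similar integral Brakke flow, which might in principle carry higher multiplicity or be singular; the Lojasiewicz-Simon approach requires a smooth multiplicity-one reference object, so at least a local version of the \emph{multiplicity-one conjecture} must be established in tandem. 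For these reasons, a complete resolution of uniqueness of tangent flows appears to require substantive progress on the two preceding conjectures first.
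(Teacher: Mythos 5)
This statement is a \emph{conjecture}, and the paper offers no proof of it; it only remarks that ``by results of Schulze, Colding-Minicozzi and Chodosh-Schulze this would follow from a resolution of the multiplicity-one conjecture and the no cylinder conjecture.'' Your proposal is honest about this and in fact reproduces the paper's own assessment almost exactly: you correctly frame tangent-flow uniqueness as forward convergence of the renormalized flow (the gradient flow of Huisken's Gaussian area), you correctly attribute the compact case to Schulze via Simon's original Lojasiewicz--Simon theorem, the cylindrical case to Colding--Minicozzi, and the asymptotically conical case to Chodosh--Schulze, and you correctly isolate the two genuine obstacles --- mixed-type shrinkers, which would be eliminated by the no cylinder conjecture, and higher-multiplicity or singular tangent flows, which would be eliminated by the multiplicity-one conjecture. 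So there is no gap in your reasoning relative to the paper; rather, you have correctly identified that a complete proof does not currently exist and conditioned your outline on the same two prerequisite conjectures the paper cites. One small refinement worth noting: the Lojasiewicz--Simon inequality you write down should really be stated modulo the action of dilations and rigid motions (and for the cylinder, modulo the extra translation along the axis), and the ``gradient flow identity'' for the rescaled flow holds only in the weighted sense with respect to the Gaussian measure; Colding--Minicozzi's argument in the cylindrical case actually bypasses the abstract Lojasiewicz--Simon framework in favor of a direct quantitative estimate because the relevant linearized operator on the noncompact cylinder has a more delicate spectral structure than in Simon's compact setting. But these are refinements of execution, not of strategy, and your strategic outline matches the paper's.
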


By results of Schulze, Colding-Minicozzi and Chodosh-Schulze this would follow from a resolution of the no cylinder conjecture. Alternatively, one may also try to establish uniqueness directly.

\begin{conjecture}[Bounded diameter conjecture]
The intrinsic diameter stays uniformly bounded as one approaches the first singular time.
\end{conjecture}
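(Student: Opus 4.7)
The plan is to combine local analysis near singular points with the singularity classification developed in the previous lectures. Let $T$ denote the first singular time and let $\mathcal{S}\subset\mathbb{R}^3$ be the singular set at time $T$, i.e.\ the closed set of $x_0$ such that $\limsup_{t\nearrow T}\sup_{B(x_0,r)\cap M_t}|A|=\infty$ for every $r>0$. By the avoidance principle the whole flow stays in some ball $B_R$ of fixed radius $R=R(M_0)$, and by Huisken's monotonicity together with $\varepsilon$-regularity the set $\mathcal{S}$ is a nonempty compact subset of $B_R$. I would try to bound $\mathrm{diam}(M_t)$ by (i) bounding the intrinsic diameter of the regular region and (ii) bounding the intrinsic diameter contribution of a small neighborhood of each singular point.

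For (i), away from $\mathcal{S}$ the curvature is locally uniformly bounded as $t\to T$ by Theorem~\ref{app_thm_easy_brakke}. Hence for any $\delta>0$ the truncation $A_t^\delta := M_t\setminus\bigcup_{x_0\in\mathcal{S}}B(x_0,\delta)$ has curvature uniformly bounded by some $C(\delta)$, sits inside $B_R$, and consequently has uniformly bounded intrinsic diameter by integrating $|\nabla_{M_t}x|\le 1$ along minimizing geodesics and using the extrinsic bound. The problem therefore reduces to bounding the intrinsic diameter of $M_t\cap B(x_0,2\delta)$ for each $x_0\in\mathcal{S}$, uniformly in $t$ approaching $T$.

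For (ii), I would invoke the singularity classification. Assuming the multiplicity-one conjecture, every tangent flow at $(x_0,T)$ is of the form $\sqrt{-t}\,\Sigma$ for a smooth embedded shrinker $\Sigma$. If $\Sigma$ is a round sphere, then near $x_0$ the surface is modelled on a shrinking sphere and its intrinsic diameter is $O(\sqrt{T-t})$, hence bounded. If $\Sigma$ is a round cylinder, then by the mean-convex neighborhood theorem the flow is mean-convex in $B(x_0,2\delta)$ for $t$ close to $T$, and one invokes the known bounded diameter result for mean-convex flows, localized to this neighborhood. For a general smooth shrinker $\Sigma$, a combination of uniqueness of cylindrical tangent flows and a Brendle-Choi type decomposition of $\Sigma$ into cylindrical and regular regions should reduce matters back to the previous two cases at smaller scales, ultimately producing only finitely many neck and sphere events inside $B(x_0,2\delta)$.

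The main obstacle is twofold. First, even granting mean-convex neighborhoods, one must rule out accumulation of infinitely many neck-pinches at different scales inside a fixed $B(x_0,2\delta)$; this presumably requires a quantitative curvature estimate of the form $|A|(x,t)\lesssim \mathrm{dist}_{\mathrm{par}}((x,t),\mathcal{S}\times\{T\})^{-1}$ together with a geometric lemma showing that each neck contributes only $O(\sqrt{T-t})$ to the local intrinsic diameter. Second, and more fundamentally, the entire approach rests on the multiplicity-one conjecture: if a tangent flow carries multiplicity $\ge 2$, then two distant sheets of $M_t$ can collapse together while remaining intrinsically far apart, and the length of the thin tube connecting them is not controlled by any currently known monotone quantity. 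For this reason I expect the bounded diameter conjecture to be tightly coupled to multiplicity-one, and any complete proof will either resolve both simultaneously or introduce a new local geometric estimate that bypasses the classification of tangent flows entirely.
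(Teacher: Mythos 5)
This statement is labeled a \emph{conjecture} in the paper, and indeed the Bounded Diameter Conjecture is open: the paper offers no proof, only the remark that Du has shown it would \emph{follow from} the multiplicity-one conjecture together with the no-cylinder conjecture, and that special cases are known \cite{GH_diameter,Du}. So there is no ``paper's own proof'' to compare against, and your proposal should be read as a conditional strategy, not a proof.

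Evaluated as a strategy, your sketch is pointed in the right direction but has two concrete gaps. First, you invoke only the multiplicity-one conjecture, whereas the known reduction requires the no-cylinder conjecture as well. Your step (ii) for ``a general smooth shrinker $\Sigma$'' is precisely where this bites: once $\Sigma$ is not a sphere or a cylinder, its ends may be conical or cylindrical (by Wang's classification of shrinker ends), and a mixed-type shrinker with both a cylindrical end and a conical end would defeat the ``reduce to the previous two cases at smaller scales'' step. The no-cylinder conjecture is exactly what rules this out and yields the clean dichotomy (all singularities are neck-type or conical-type) that the decomposition needs. Without it, the phrase ``a Brendle--Choi type decomposition of $\Sigma$ into cylindrical and regular regions'' is not available: Brendle--Choi classify ancient noncollapsed flows, not arbitrary shrinkers, and their result does not supply a decomposition of a general shrinker. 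Second, even in the cylindrical case your appeal to ``the known bounded diameter result for mean-convex flows, localized to this neighborhood'' is not a black box. The global mean-convex diameter bound uses global structure (e.g.\ one-sided minimization and topological control), and localizing it to $B(x_0,2\delta)$ while the rest of the surface is only controlled via $\varepsilon$-regularity requires a genuine gluing/accounting argument across scales; this is the content of \cite{GH_diameter,Du}, not a corollary of the mean-convex neighborhood theorem alone. You correctly flag the potential accumulation of neck-pinches as a difficulty, but the proposal does not supply the quantitative curvature-versus-distance estimate you say would be needed, so this remains a gap rather than a resolved step. In short: your instinct that the problem is tightly coupled to multiplicity-one is correct, but a complete conditional argument must also assume (or prove) the no-cylinder conjecture, and must actually carry out the multi-scale bookkeeping that you only gesture at.
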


This is motivated by a corresponding conjecture of Perelman for 3d Ricci flow. Du proved that the bounded diameter conjecture would follow from a resolution of the no cylinder conjecture. Alternatively, one may also try to establish the diameter bound directly.

Next, let me discuss several conjectures about the fine structure of the singular set. To be specific one can assume that the flow through singularities is either the outermost flow or the innermost flow.

\begin{conjecture}[Isolation conjecture]
All singularities are isolated unless an entire tube shrinks to a closed curve.\footnote{Recently, Sun and Xue proved that nondegenerate neck-singularities are isolated, which may be useful to establish a generic version of the isolation conjecture.}
\end{conjecture}

The conjecture is motivated by the principle that solutions of the level set flow, while only twice-differentiable, to some extent behave like analytic functions. Weaker formulations of the conjecture are that there are only finitely many singular times or that singularities are isolated generically. Another well-known related open problem is:

\begin{openproblem}[Self-similarity of blowup limits]
Are blowup limits always selfsimilar?
\end{openproblem}

In joint work with B. Choi and Hershkovits we proved that the ancient ovals occur as blowup limit if and only if there is an accumulation of spherical singularities. Yet another related open problem is:

\begin{openproblem}[Shrinking tubes]
Which closed curves can arise as singular set of a mean curvature flow of embedded surfaces?
\end{openproblem}

The only known example is the marriage-ring, which shrinks to a round circle.

Let us now switch gears and discuss two problems that specifically assume that the surface is topologically a two-sphere:

\begin{openproblem}[Mean curvature flow proof of Smale conjecture]
Is there a mean curvature flow proof of the Smale conjecture?\footnote{The $\pi_0$-part follows from \cite{BHH}, thanks to \cite{Brendle_sphere,CHH,DanielsHolgate,BK_mult1}, but showing that the higher homotopy groups also vanish requires more work.}
\end{openproblem}

In one of its many formulations, Smale's conjecture states that the space of embedded two-spheres in $\mathbb{R}^3$ is contractible. There is a geometric proof by Hatcher and a Ricci flow proof by Bamler-Kleiner, but it would be nice to have a direct proof by mean curvature flow.

\begin{conjecture}[Yau's conjecture]
The three-sphere equipped with any Riemannian metric contains at least $4$ embedded minimal two-spheres.\footnote{Recently, Wang and Zhou proved Yau's conjecture for generic metrics.}
\end{conjecture}

This is inspired by the classical work of Lusternik-Schnirelman and Grayson, which establishes the existence of at least 3 closed embedded geodesics on the two-sphere with any metric.

Finally, let me mention two somewhat more open ended problems:

\begin{openproblem}[Selection principle]
Is there a selection principle for flowing out of conical singularities?
\end{openproblem}

It has been proposed by Dirr-Luckhaus-Novaga and Yip that considering mean curvature flow with space-time white noise the physically relevant solutions will be selected in the vanishing noise limit.\footnote{By recent work of Chodosh, Daniels-Holgate and Schulze it suffices to study this problem for flows whose initial condition is exactly a cone.}

\begin{openproblem}[Immersed surfaces]
Develop a theory of weak solutions for the flow of immersed surfaces.
\end{openproblem}

Many of the methods described in this lecture series rely on embeddedness, so some fundamentally new ideas would be needed.

\bigskip

\begin{relatedPDEs}
Many of the problems discussed here, including in particular the uniqueness of tangent-cones/flows, genericity, optimal partial regularity, finiteness of singularities, self-similarity of blowup limits and selection principle for evolution through singularities, are of central importance in a wide range of partial differential equations.
\end{relatedPDEs}

\bigskip

\bigskip

\bibliography{lectures_mcf}

\bibliographystyle{alpha}

\bigskip

{\sc Department of Mathematics, University of Toronto, 40 St George Street, Toronto, ON  M5S 2E4, Canada}

\emph{E-mail:} roberth@math.toronto.edu

\end{document}